\documentclass[a4paper,reqno]{amsart}

\usepackage{enumerate}
\usepackage{amssymb} % For \gtrsim and \lesssim
\usepackage[colorlinks=true]{hyperref}
\hypersetup{urlcolor=blue, citecolor=cyan}
\hypersetup{citecolor=blue}

\numberwithin{equation}{section}

\newtheorem{thm}{Theorem}[section]
\newtheorem{lem}[thm]{Lemma}

\newtheorem{prop}[thm]{Proposition}
\theoremstyle{definition}
\newtheorem{rem}[thm]{Remark}

\newcommand\R{{\mathbb R}}
\newcommand\C{{\mathbb C}}
\newcommand\N{{\mathbb N}}

\newcommand\Cset{ K }
\newcommand\IL{ \mu }
\newcommand\Foru{{\mathcal P}}
\newcommand\Err{{\mathcal E}}

\newcommand\Calpha {C_\alpha }
\newcommand\Csu{ M }

\newcommand\Loc{{\mathrm{loc}}}

\newcommand\goto{\mathop{\longrightarrow}}

\newcommand\MScN[1]{\href{http://www.ams.org/mathscinet-getitem?mr=#1}{\nolinkurl{(#1)}}}
\newcommand\DOI[1]{\href{http://dx.doi.org/#1}{(doi: \nolinkurl{#1})}}
\newcommand\LINK[1]{\href{#1}{(link: \nolinkurl{#1})}}

\begin{document}

\title{Blowup on an arbitrary compact set for a Schr\"o\-din\-ger equation with nonlinear source term}

\def\shorttitle{Blowup on a compact set}

\author[T. Cazenave]{Thierry Cazenave$^1$}
\email{\href{mailto:thierry.cazenave@sorbonne-universite.fr}{thierry.cazenave@sorbonne-universite.fr}}

\author[Z. Han]{Zheng Han$^2$}
\email{\href{mailto:hanzh_0102@hznu.edu.cn}{hanzh\_0102@hznu.edu.cn}}

\author[Y. Martel]{Yvan Martel$^{3}$}
\email{\href{mailto:yvan.martel@polytechnique.edu}{yvan.martel@polytechnique.edu}}

\thanks{ZH thanks NSFC 11671353,11401153, Zhejiang Provincial Natural Science Foundation of China under Grant No. LY18A010025, and CSC for their financial support; and the Laboratoire Jacques-Louis Lions for its kind hospitality}

\address{$^1$Sorbonne Universit\'e \& CNRS, Laboratoire Jacques-Louis Lions,
B.C. 187, 4 place Jussieu, 75252 Paris Cedex 05, France}

\address{$^2$Department of Mathematics, Hangzhou Normal University, Hangzhou, 311121, China}

\address{$^3$CMLS, \'Ecole Polytechnique, CNRS, 91128 Palaiseau Cedex, France}

\subjclass[2010]{Primary 35Q55; Secondary 35B44, 35B40}

\keywords{Nonlinear Schr\"o\-din\-ger equation, finite-time blowup, blow-up set, blow-up profile}

\begin{abstract}
We consider the nonlinear Schr\"o\-din\-ger equation on $\R^N $, $N\ge 1$,
\[ 
\partial _t u = i \Delta u + \lambda  | u |^\alpha u \quad \mbox{on $\R^N $, $\alpha>0$,}
\]
with $\lambda \in \C$ and $\Re \lambda >0$,  for $H^1$-subcritical  nonlinearities, i.e. $\alpha >0$ and $(N-2) \alpha < 4$.
Given a compact set $\Cset \subset \R^N $, we construct $H^1$ solutions that are defined on $(-T,0)$ for some $T>0$, and blow up on $\Cset $ at $t=0$. 
The construction is based on an appropriate ansatz. The initial ansatz is simply $U_0(t,x) = ( \Re \lambda  )^{- \frac {1} {\alpha }}  (-\alpha t + A(x) )^{ -\frac {1} {\alpha } - i \frac {\Im \lambda } {\alpha \Re \lambda } }$, where $A\ge 0$ vanishes exactly on $ \Cset $, which is a solution of the ODE $u'= \lambda | u |^\alpha u$. 
We refine this ansatz inductively, using ODE techniques. 
We complete the proof by energy estimates and a compactness argument.
This strategy is reminiscent of~\cite{CMZ-NLS, CMZ-wave1}.
\end{abstract}

\maketitle

%\begin{center} 
%\today
%\end{center} 

\section{Introduction}

We consider the nonlinear Schr\"o\-din\-ger equation
\begin{equation} \label{NLS1} 
\partial _t u = i \Delta u + \lambda  | u |^\alpha u
\end{equation} 
on $\R^N $, where 
\begin{equation} \label{fDFG0} 
\alpha >0, \quad 
(N-2) \alpha <  4. 
\end{equation}
and $\Re \lambda >0$. Note that by scaling invariance of~\eqref{NLS1}, we may assume that $\Re \lambda =1$, so we write
\begin{equation} \label{fCL1} 
 \lambda =1 + i \IL , \quad \IL \in \R.
\end{equation} 
Under assumption~\eqref{fDFG0}, equation~\eqref{NLS1} is $H^1$-subcritical, so that the corresponding Cauchy problem is locally well posed in $H^1 (\R^N )$.

Concerning blowup, it is proved in~\cite[Theorem~1.1]{CazenaveCDW-Fuj} that, for $ \alpha <2/N$, equation~\eqref{NLS1} has no global in time $H^1$ solution that remains bounded in $H^1$. In other words, every $H^1$ solution blows up, in finite or infinite time.
Moreover, it is proved in~\cite{CMZ-NLS} that under assumption~\eqref{fDFG0} with the restriction $\alpha \ge 2$, and for $\lambda =1$, finite-time blowup occurs. 
This result is extended in~\cite{KawakamiM} to the case $\alpha >1$ and $\lambda \in \C$ with $(\alpha +2) \Re \lambda \ge \alpha  |\lambda |$. 

In this paper, we extend the previous blow-up results to the whole range of $H^1$ subcritical powers and arbitrary $\lambda \in \C$ with $\Re \lambda >0$. Moreover, we prove blowup on any prescribed compact subset of $\R^N $. Our result is the following.

\begin{thm} \label{eThm1} 
Let $N\ge 1$, let $\alpha >0$ satisfy~\eqref{fDFG0}, let $\IL \in \R$ and $\lambda $ given by~\eqref{fCL1}, and let $\Cset $ be a nonempty compact subset of $\R^N $.
It follows that there exist $T>0$ and a solution $u\in C((-T , 0), H^1 (\R^N ) ) $ of~\eqref{NLS1} 
 which blows up at time $0$ exactly on $ \Cset $ in the following sense.
\begin{enumerate}[{\rm (i)}] 

\item \label{eThm1:1}  If $x_0\in  \Cset $ then for any $r>0$,
\begin{equation}\label{xE}
\lim_{t\uparrow 0} \|u(t)\|_{L^2(|x-x_0|<r)}=\infty .
\end{equation}

 \item \label{eThm1:2} 
If $U $ is an open subset of $\R^N $ such that $ \Cset \subset U$, then 
\begin{equation}\label{fEssupg}
\lim_{t\uparrow 0} \| \nabla u(t)\|_{L^2( U )}=\infty .
\end{equation}

\item \label{eThm1:3}  If $\Omega $ is an open subset of $\R^N $ such that $ \overline{\Omega } \cap \Cset = \emptyset$, then
 \begin{equation}\label{xnotE}
 \sup_{t\in( -T, 0] }   \|u(t)\|_{ H^1 ( \Omega  )} <\infty .
\end{equation}

\end{enumerate}
\end{thm}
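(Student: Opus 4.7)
The strategy I would follow is the one suggested in the abstract: construct a sufficiently accurate approximate solution $U$ by iteratively refining the ODE ansatz $U_0$, then close an $H^1$ perturbation analysis and pass to the limit through compactness. As a first step I would fix a smooth nonnegative function $A\in C^{\infty}(\R^N)$ vanishing exactly on $\Cset$, bounded below by a positive constant outside any neighborhood of $\Cset$, with suitable growth at infinity and flatness on $\Cset$, for example $|\nabla^j A| \lesssim A^{1-j\theta}$ for some $\theta>0$ depending on $\alpha$. The leading ansatz
\begin{equation*}
U_0(t,x) = \bigl(-\alpha t + A(x)\bigr)^{-\frac{1}{\alpha} - i\frac{\IL}{\alpha}}
\end{equation*}
solves the ODE $\partial_t U_0 = \lambda|U_0|^\alpha U_0$ exactly, so that the residual produced by inserting it into \eqref{NLS1} is $\Err_0 = -i\Delta U_0$, whose size is dictated by the derivatives of $A$ and by negative powers of $-\alpha t + A$, and which blows up near $\Cset$ as $t\uparrow 0$.

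The core of the argument is the inductive construction of corrections $U_n = U_0 + V_1 + \cdots + V_n$, each $V_k$ of the form (smooth function of $A$ and its derivatives) times a negative fractional power $(-\alpha t + A)^{-\frac{1}{\alpha} - k\beta}$ for an appropriate $\beta>0$. The profiles are chosen so that the linearization of the ODE $u'=\lambda|u|^\alpha u$ around $U_0$ absorbs the leading part of the previous error; since this linearization has an explicit fundamental solution, every $V_k$ is obtained by solving a simple first-order ODE in $t$ with coefficients determined by $A$ and its derivatives. After finitely many iterations, whose number depends on $\alpha$, $N$, and the flatness exponent of $A$, the final error $\Err_n$ becomes controllable in $H^1(\R^N)$, with a bound that is integrable in $t$ up to $0$.

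To produce a genuine solution I would pick a sequence $t_k \uparrow 0$ and let $u_k$ solve \eqref{NLS1} on $(-T, t_k]$ backwards with $u_k(t_k) = U_n(t_k)$; the difference $w_k = u_k - U_n$ then satisfies a perturbed Schr\"odinger equation with source $-\Err_n$ and terminal condition $w_k(t_k) = 0$. A Gronwall-type $H^1$ energy estimate, using \eqref{fDFG0} to treat $|u|^\alpha u$ as an $H^1$-Lipschitz perturbation on bounded sets, yields bounds on $w_k$ that are uniform in $k$; a standard compactness argument (weak convergence in $H^1$ combined with an Aubin--Lions-type argument to pass to the limit in the nonlinear term) then extracts a limit $u \in C((-T,0), H^1)$ solving \eqref{NLS1} with $u-U_n \to 0$ in $H^1$ as $t\uparrow 0$. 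Assertions \eqref{xE}--\eqref{xnotE} then follow from the explicit form of $U_n$: on $\Cset$ one has $|U_0(t,x)| = (-\alpha t)^{-1/\alpha}$, whose square fails to be locally integrable near any $x_0\in\Cset$ as $t\uparrow 0$; the gradient likewise blows up in $L^2$ on any open neighborhood of $\Cset$; and on any $\overline{\Omega}$ disjoint from $\Cset$ the function $A$ is bounded below, so each term of $U_n$, and therefore $u$, stays uniformly bounded in $H^1(\Omega)$ up to $t=0$.

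The step I expect to be most delicate is the algebra of the iterative correction procedure in the regime $\alpha<1$, where $z \mapsto |z|^\alpha z$ is only H\"older continuous: expanding $|U_{n-1}+V_n|^\alpha(U_{n-1}+V_n)$ and identifying the cancellations requires choosing $V_n$ so that the singular powers of $-\alpha t + A$ match exactly, and one must check that finitely many corrections suffice, which is where the $H^1$-subcritical condition \eqref{fDFG0} and the flatness of $A$ interact. The dispersive term $i\Delta$ plays a benign role since the analysis is local in time near $t=0$, but it couples $U_0$ to the corrections through derivatives of $A$, so that tracking the powers $(-\alpha t + A)^{-\nu}$ across the induction is the heaviest bookkeeping in the proof.
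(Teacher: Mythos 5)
Your overall architecture coincides with the paper's: the ansatz $U_0=(-\alpha t+A)^{-\frac1\alpha-i\frac\IL\alpha}$ with $A$ flat and vanishing exactly on $\Cset$, iterative corrections obtained by solving the ODE linearized around $U_0$ (which indeed has explicit fundamental solutions, cf.~\eqref{fRA3}), approximate solutions with data $U_J(t_k)$ at times $t_k\uparrow0$ solved backwards, a compactness passage to the limit, and the blow-up/boundedness statements read off from the ansatz plus smallness of the remainder. The gap is in the step you dismiss in one sentence, and it is the crux of the whole problem: the claim that a ``Gronwall-type $H^1$ energy estimate, treating $|u|^\alpha u$ as an $H^1$-Lipschitz perturbation on bounded sets,'' gives bounds on $w_k=u_k-U_n$ uniform in $k$. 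First, there is no bounded set: $\|U_J(t)\|_{L^\infty}\sim(-t)^{-1/\alpha}$ and $\|U_J(t)\|_{H^1}\to\infty$ as $t\uparrow0$, so the effective Lipschitz coefficient in the $L^2$ estimate is $\sim\|U_J\|_{L^\infty}^\alpha\sim(-t)^{-1}$, which is \emph{not} integrable up to $t=0$; Gronwall then only produces a loss $(-t)^{-C}$ with $C$ depending on $\lambda,\alpha$, and closing the argument requires quantifying how many corrections are needed so that the residual beats this loss — this is exactly the bookkeeping $\sigma,J,k$ in \eqref{feApprox1:3}--\eqref{feApprox1:7} and the weighted quantity $(-t)^{-\sigma}\|\varepsilon\|_{L^2}^2$ in the paper, which your proposal does not address (``finitely many corrections suffice'' is asserted, not linked to the size of the Gronwall loss).

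Second, and more fundamentally, at the $\dot H^1$ level the nonlinearity is \emph{not} Lipschitz in any sense usable by a pure energy method for $N\ge2$: differentiating $\|\nabla\varepsilon\|_{L^2}^2$ produces the term $\int|\varepsilon|^\alpha|\nabla\varepsilon|^2$, which cannot be bounded by $\|\varepsilon\|_{H^1}$ (there is no $H^1\hookrightarrow L^\infty$), so a Gronwall inequality in $H^1$ simply does not close in the full subcritical range — this is why standard local theory uses Strichartz estimates rather than energy estimates. The paper's proof survives precisely because this dangerous term enters with a favorable sign when integrating backwards in time, via $\Re(\nabla f(\varepsilon)\cdot\nabla\bar\varepsilon)\ge|\varepsilon|^\alpha|\nabla\varepsilon|^2$ (a consequence of $\Re\lambda>0$), combined with the modified energy $E=\frac12\int|\nabla\varepsilon|^2-\frac\IL{\alpha+2}\int|\varepsilon|^{\alpha+2}$, the Gagliardo--Nirenberg absorption \eqref{fDerf7}, and a bootstrap that controls the space-time quantity $\int\int|\varepsilon|^\alpha|\nabla\varepsilon|^2$ as in \eqref{fAP3}. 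Without this structural input your $H^1$ estimate fails for $N\ge2$ and $\alpha$ not small. A minor additional point: your derivation of \eqref{fEssupg} from a direct lower bound on $\|\nabla U_n\|_{L^2(U)}$ is not justified as stated (near the zeros of $A$ the gradient of the ansatz degenerates and a computation is needed); the paper instead deduces blowup of $\|\nabla u\|_{L^2(\R^N)}$ from the local $L^p$ lower bounds \eqref{e:24} via Sobolev/Gagliardo--Nirenberg and then localizes using part \eqref{eThm1:3}.
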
 

\begin{rem} \label{eRM1} 
Here are some comments on Theorem~\ref{eThm1}.
\begin{enumerate}[{\rm (i)}] 

\item \label{eRM1:1} 
Estimate~\eqref{xE} can be refined. More precisely, it follows from~\eqref{fEstLp:2} that $ (- t)^{-\frac 1{\alpha }+\frac N{2k}} \lesssim  \| u (t)\|_{L^2 ( |x-x_0| <r )} \lesssim   (- t)^{-\frac 1{\alpha }}$ where $k > \frac {N\alpha } {2}$ is given by~\eqref{feApprox1:3}--\eqref{feApprox1:7}.  

\item \label{eRM1:2}
From the proof of Theorem~\ref{eThm1}, the blow-up mechanism for $u$ is described as follows. 
If $U_0 (t, x) = (- \alpha t + A (x))^{- \frac {1 } {\alpha } - i \frac {\IL} {\alpha }}$ 
where $A\ge 0$ (which vanishes exactly on $K$) is given by~\eqref{fDfnA}, then
$u (t) = U_0 (t) + V (t) + \varepsilon (t) $, where $ \| V (t) \| _{ H^1 }\lesssim (-t)^\nu  \| U_0 (t)\| _{ L^2 }$ and $ \| \varepsilon (t) \| _{ H^1 } \lesssim (-t)^\nu $ for some $\nu >0$, see~\eqref{feEstUJ:3} and~\eqref{fEEN24}.

\end{enumerate} 
\end{rem} 

To prove Theorem~\ref{eThm1}, we follow the strategy, introduced in~\cite{CMZ-NLS} (see also the references there), of defining the ansatz $U_0$,  blowing-up solution of the ODE $\partial _t U_0 = \lambda  |U_0|^\alpha U_0$, and then using energy estimates and compactness arguments. 
In~\cite{CMZ-NLS}, restricted to $\alpha \ge 2$ and $\lambda =1$, $U_0$ is a sufficiently good approximation and blowup is proved at any finite number of points.
To treat any subcritical $\alpha $ and any $\lambda \in \C$ with $\Re \lambda >0$, we need to refine the ansatz following the technique developed in~\cite{CMZ-wave1} for the semilinear wave equation. 
We emphasize that this technique only uses ODE arguments. 
See the beginning of Sections~\ref{sBUAnsatz} and~\ref{sApprox} for more details. 
See also Remark~\ref{eCRem1} below for comments on the restriction~\eqref{fDFG0}  to $H^1$-subcritical powers.

The rest of this paper is organized as follows. In Section~\ref{sNEE} we introduce some notation that we use throughout the paper and we recall some useful estimates. In Section~\ref{sBUAnsatz} we construct the appropriate blow-up ansatz. Section~\ref{sApprox} is devoted to the construction of a sequence of solutions of~\eqref{NLS1} close to the blow-up ansatz and to estimates of this sequence. Finally, we complete the proof of Theorem~\ref{eThm1} in Section~\ref{sProof}. 

\section{Notation and preliminary estimates} \label{sNEE} 

\subsection{Some Taylor's inequalities}
Let 
\begin{equation} \label{fNEE1} 
f(u)=  |u|^\alpha  u .
\end{equation} 
In general, $f$ is not $C^1$ as a function $\C\to \C$ (except for $\alpha \in 2\N$, when $f$ is analytic).
However, $f$ is $C^1$ as a function $\R^2 \to \R^2$. We denote by $df$ the derivative of $f$ is this sense, and we have
\begin{equation} \label{fDerf2} 
df (u) v = \frac {\alpha +2} {2}  |u|^\alpha v + \frac {\alpha } {2}  |u|^{\alpha -2} u^2  \overline{v} .
\end{equation} 
We also have
\begin{equation*} 
\nabla f(u) = \frac {\alpha +2} {2}  |u|^\alpha  \nabla u + \frac {\alpha } {2}  |u|^{\alpha -2} u^2 \nabla  \overline{u} ,
\end{equation*} 
so that
\begin{equation} \label{fDerf2:b1} 
\Re (\nabla (f(u))  \cdot \nabla  \overline{u} )= \frac {\alpha +2} {2}  |u|^\alpha   |\nabla u|^2 + \frac {\alpha } {2} \Re ( |u|^{\alpha -2} u^2 (\nabla  \overline{u})^2) \ge   |u|^\alpha   |\nabla u|^2. 
\end{equation} 
In addition, we have the following estimates.

\begin{lem} \label{eANL1} 
Set 
\begin{equation} \label{fDerf3} 
\Calpha  = 
\begin{cases} 
0 & 0<\alpha \le 1 \\ 1 & \alpha >1.
\end{cases} 
\end{equation} 
There exists a constant $\Csu \ge 1$ such that
\begin{gather} 
 |df  (u)| \le \Csu  |u|^\alpha  \label{feANL1:1} \\
  | f  (u+v) - f  (u) | \le \Csu ( |u|^\alpha +  |v|^\alpha )  |v|  \label{feANL1:2} \\
 |df  (u+v) - df  (u) | \le \Csu (  |v|^\alpha  + \Calpha   |u|^{\alpha -1}  |v| ) \label{fDerf4:1:b1}  \\
|df  (u+v) - df  (u) | \le \Csu (  |u|^\alpha  +  |v|^\alpha  ) \label{feANL1:3} \\
 |f  (u+v) - f  (u) - df   (u) v| \le \Csu (  |v|^{\alpha +1} + \Calpha   |u|^{\alpha -1}  |v|^2) \label{fDerf5} 
\end{gather} 
for all $u,v\in \C$. Moreover, 
\begin{equation} \label{fDerf6} 
 | \nabla [f  (u+v) - f  (u) - f  (v)]  | \le \Csu ( |u|^\alpha +  |v|^\alpha ) ( |\nabla u| +  |\nabla v|) 
\end{equation} 
a.e. for all $u, v\in H^1 (\R^N ) $.
\end{lem}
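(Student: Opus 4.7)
The plan is to note that inequality~\eqref{feANL1:1} is immediate from the formula~\eqref{fDerf2}, since $|df(u)v| \le \frac{\alpha+2}{2}|u|^\alpha|v| + \frac{\alpha}{2}|u|^\alpha|v| = (\alpha+1)|u|^\alpha|v|$. I would then prove \eqref{feANL1:2}--\eqref{fDerf5} by reducing everything to the key estimate \eqref{fDerf4:1:b1}, obtaining \eqref{feANL1:2} and \eqref{fDerf5} by the fundamental theorem of calculus applied along the segment $[u,u+v]$, and \eqref{feANL1:3} by the triangle inequality together with \eqref{feANL1:1} (using $|u+v|^\alpha \le C(|u|^\alpha + |v|^\alpha)$). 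Concretely, $f(u+v)-f(u)=\int_0^1 df(u+tv)v\,dt$ gives \eqref{feANL1:2} via \eqref{feANL1:1}, and $f(u+v)-f(u)-df(u)v=\int_0^1[df(u+tv)-df(u)]v\,dt$ combined with \eqref{fDerf4:1:b1} (noting that $t^\alpha \le 1$ and $t \le 1$) gives \eqref{fDerf5}.

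The main step, and the place where the split at $\alpha=1$ enters, is therefore \eqref{fDerf4:1:b1}. For $\alpha>1$, I would distinguish two cases. If $|v|\le |u|/2$, then the segment $[u,u+v]$ avoids the origin and the explicit formula \eqref{fDerf2} shows that $df$ is $C^1$ there with $|d^2 f|\lesssim |\cdot|^{\alpha-1}$, so by FTC $|df(u+v)-df(u)|\lesssim |u|^{\alpha-1}|v|$. If $|v|\ge |u|/2$, then $|u|\lesssim |v|$ and $|u+v|\lesssim |v|$, so \eqref{feANL1:1} yields $|df(u+v)-df(u)|\lesssim |v|^\alpha$. For $0<\alpha\le 1$, I would prove the sharper statement $|df(u+v)-df(u)|\lesssim |v|^\alpha$ (so that the term $\Calpha|u|^{\alpha-1}|v|$ can be dropped) by remarking that from \eqref{fDerf2} the entries of $df$ are linear combinations of the maps $z\mapsto |z|^\alpha$ and $z\mapsto |z|^{\alpha-2} z^2$, each of which is continuous, positively homogeneous of degree $\alpha$, and $C^\infty$ on the unit circle; any such function is $\alpha$-H\"older on $\R^2$ by a standard scaling argument (check the estimate separately when $|v|\le |u|$, using the mean value theorem on the arc between $u/|u|$ and $(u+v)/|u+v|$ followed by scaling, and when $|v|>|u|$, where boundedness of the maps on the sphere and homogeneity give $\lesssim |v|^\alpha$ directly).

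Finally, for~\eqref{fDerf6} I would use the chain rule in the form $\nabla(f(w)) = df(w)\cdot\nabla w$ valid a.e.\ for $w\in H^1(\R^N)$, which reorganizes the left-hand side as
\[
\nabla[f(u+v)-f(u)-f(v)] = [df(u+v)-df(u)]\nabla u + [df(u+v)-df(v)]\nabla v,
\]
and then apply \eqref{feANL1:3} to each bracket (the second one via symmetry $u\leftrightarrow v$). I expect the only real obstacle to be the H\"older regularity of $df$ in the low-exponent regime $0<\alpha\le 1$, i.e.\ the proof that $z\mapsto |z|^{\alpha-2}z^2$ is $\alpha$-H\"older despite being only $C^{0,\alpha}$ at the origin; the rest is routine FTC and algebra.
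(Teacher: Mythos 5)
Your proposal is correct, and its skeleton matches the paper's: \eqref{feANL1:1} read off from \eqref{fDerf2}, then \eqref{feANL1:2} and \eqref{fDerf5} by the fundamental theorem of calculus along the segment $[u,u+v]$, and \eqref{fDerf6} from the same decomposition $\nabla[f(u+v)-f(u)-f(v)]=[df(u+v)-df(u)]\nabla u+[df(u+v)-df(v)]\nabla v$ followed by \eqref{feANL1:3}. The two genuine differences are localized. First, the paper does not prove \eqref{fDerf4:1:b1} at all: it declares it classical and cites formulas (2.26)--(2.27) of \cite{CFH}, whereas you supply a self-contained argument (for $\alpha>1$, the split $|v|\le|u|/2$ versus $|v|\ge|u|/2$ with a second-derivative bound $\lesssim|\cdot|^{\alpha-1}$ away from the origin; for $0<\alpha\le1$, the $\alpha$-H\"older estimate for the degree-$\alpha$ positively homogeneous entries $z\mapsto|z|^\alpha$, $z\mapsto|z|^{\alpha-2}z^2$ via smoothness on the circle and scaling). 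That argument is sound and has the added virtue of explaining why $\Calpha=0$ when $\alpha\le1$; the cost is length, which is exactly what the citation avoids. Second, you obtain \eqref{feANL1:3} directly from the triangle inequality and \eqref{feANL1:1} together with $|u+v|^\alpha\lesssim|u|^\alpha+|v|^\alpha$, while the paper derives it from \eqref{fDerf4:1:b1} and the elementary bound $|v|^\alpha+\Calpha|u|^{\alpha-1}|v|\le2(|u|^\alpha+|v|^\alpha)$; both are one-line deductions and equally valid (constants are harmlessly absorbed into a single $\Csu$). So the only nontrivial content you add is the proof of the H\"older-type estimate the paper outsources, and you add it correctly.
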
 

\begin{proof} 
Estimate~\eqref{feANL1:1} is an immediate consequence of~\eqref{fDerf2}, and~\eqref{feANL1:2} follows, using 
\begin{equation*} 
f  (u+v)- f (u) =  \int _0^1 \frac {d} {dt} f  (u+tv) \,dt =  \int _0^1 df  (u+tv) v \,dt.
\end{equation*} 
Estimate~\eqref{fDerf4:1:b1} is classical, see e.g.~\cite[formulas~(2.26)-(2.27)]{CFH}; and~\eqref{feANL1:3} follows from~\eqref{fDerf4:1:b1} and the elementary estimate
\begin{equation*} 
|v|^\alpha  + \Calpha  |u|^{\alpha -1}  |v| \le 2(  |u|^\alpha +  |v|^\alpha ).
\end{equation*} 
Writing
\begin{equation*} 
f  (u+v)- f  (u) - df  (u)v = \int _0^1 [ df  (u+\theta v)v - df  (u)v ]\, d\theta 
\end{equation*}  
and using~\eqref{fDerf4:1:b1}, we obtain~\eqref{fDerf5}. 
Finally, given $u,v\in H^1 (\R^N ) $,
\begin{equation*} 
\begin{split} 
\partial  _{ x_j }[f  (u+v)- f  (u)- f(v)] & \\ = [df  (u+v) - & df  (u) ]\partial  _{ x_j } u + [df  (u+v) - df   (v) ]\partial  _{ x_j } v
\end{split} 
\end{equation*} 
so that~\eqref{fDerf6}  follows from~\eqref{feANL1:3}.
\end{proof}

\subsection{A Sobolev inequality}
We have
\begin{equation} \label{fSob1} 
 | u |^\alpha  |\nabla u|^2 \ge  | u |^\alpha  |\nabla  |u| |^2 = \frac {4} {(\alpha +2)^2}  |\nabla (  |u|^{\frac {\alpha +2} {2}} )|^2.
\end{equation} 
Note that by~\eqref{fDFG0} 
\begin{equation*} 
\theta : = \frac { N \alpha  } {4(\alpha +1)} \in (0,1) .
\end{equation*}
Since $ \frac {\alpha +2} {4( \alpha +1)} = \frac {1} {2} -  \frac {\theta } {N}$ ,
we deduce from Gagliardo-Nirenberg's inequality that 
\begin{equation*} 
\| v \| _{L^{\frac {4(\alpha +1) } {\alpha +2}}}\lesssim  \| v \| _{L^2}^{1- \theta }  \| \nabla v \| _{ L^2 } ^{\theta },
\end{equation*}
so that, letting $v =   |u|^{\frac {\alpha +2} {2}} $ and using~\eqref{fSob1}, 
\begin{equation*} 
\int  |u|^{2\alpha +2} \lesssim   \Bigl( \int  |u|^{\alpha +2} \Bigr)^{  \frac {4 - (N-4) \alpha  } { 2 (\alpha + 2) }}  \Bigl( \int  |u|^\alpha   | \nabla u |^2 \Bigr)^ {\frac { N\alpha  } {2 (\alpha + 2) }} .
\end{equation*} 
Since $ \frac { N\alpha  } {2 (\alpha + 2) } <1$ by~\eqref{fDFG0}, we see that for every $\eta >0$, there exists $C_\eta>0$ such that
\begin{equation} \label{fDerf7} 
\int  |u|^{2\alpha +2} \le \eta \int  |u|^\alpha   | \nabla u |^2 + C_\eta \int  |u|^{\alpha +2}.
\end{equation} 

\subsection{Fa\`a di Bruno's formula} \label{ssFdB} 
We recall that by the Fa\`a di Bruno formula (see Corollary~2.10 in~\cite{CoSa}), if $\beta $ is a multi-index, $ |\beta |\ge 1$, 
if $a\in C^{ |\beta |} (  O , \R) $ where $O$ is an open subset of $\R^N $, 
and if $\varphi \in C^{ |\beta |} ( U, \R )$ where $U$ is a neighborhood of $a(O)$, then on $O$, $\partial _x^\beta [ \varphi ( a (\cdot ) ) ]$ is a sum of terms of the form 
\begin{equation*} 
\varphi ^{(\nu)} ( a (\cdot ) ) \prod _{ \ell =1 }^{ |\beta |} (\partial _x^{\beta _\ell} a (\cdot ))^{\nu _\ell},
\end{equation*} 
with appropriate coefficients, where $\nu \in \{1, \cdots,  |\beta | \}$, $\nu _\ell \ge 0$, $\displaystyle \sum  _{ \ell=1 }^{  |\beta | }\nu_\ell = \nu $, $\displaystyle \sum _{ \ell=1 }^{  |\beta | } \nu _\ell \beta _\ell = \beta  $.

\section{The blow-up ansatz} \label{sBUAnsatz} 

In this section, we construct inductively an appropriate blow-up ansatz. 
The first ansatz is $U_0$ defined by~\eqref{fDUZ1} below. $U_0$ is a natural candidate, since it is an explicit blowing-up  solution of the ODE $\partial _t U_0= \lambda f(U_0)$. Moreover, the error term $i\Delta U_0$ is of lower order than both $\partial _t U_0$ and $ f(U_0) $. (See Lemma~\eqref{eEstUz} below.) However, we need at least the error term to be integrable in time near the singularity. Since $\Delta U_0$ is of order $(-t)^{ -\frac {2} {k} }  |U_0| \lesssim t^{- \frac {1} {\alpha } - \frac {2} {k}}$, this is not the case for any choice of $k$ if $\alpha \le 1$. 
In Section~\ref{ssrefined}, we introduce a procedure to reduce the singularity of the error term at any order of $(-t)$ by refining the approximate solution. 
This is important, not only to obtain blowup for arbitrarily small powers $\alpha $, but also to avoid any condition between $\Re \lambda $ and $\Im \lambda $.
We also point out that in this section, there is no condition on the power $\alpha $ other than $\alpha >0$.

Throughout this section, we assume  
\begin{equation} \label{feEstUz:1:3}
k> \max  \{ 6,  N\alpha \}. 
\end{equation} 
Let $A\in C^{k-1} (\R^N , \R)$ be piecewise of class $C^k$ and satisfy
\begin{equation} \label{on:A}
\begin{cases} 
A\geq 0 \hbox{ and }
|\partial_x^\beta A|\lesssim A^{1-\frac {|\beta|}{k}} &  \text{on }\R^N  \text{ for }  |\beta |\le k-1  ,\\
 A(x)=|x|^k &  \text{for }x\in \R^N ,  |x|\ge 2 . 
\end{cases} 
\end{equation} 
Assuming~\eqref{fCL1}, \eqref{feEstUz:1:3} and~\eqref{on:A}, and set 
\begin{equation} \label{fDUZ1}
U_0 (t, x) = (- \alpha t + A (x))^{- \frac {1 } {\alpha } - i \frac {\IL} {\alpha }}\quad t<0, \, x\in \R^N.
\end{equation} 
It follows that 
\begin{gather} 
 \text{$U_0$ is $C^\infty $ in $t<0$ and $C^{k-1}$ in $x\in \R^N $}, \label{fEUZ1:b1} \\
 \partial _t U_0 = \lambda f( U_0) , \label{fEUZ1} \\
  | U_0 | = (- \alpha t + A (x))^{- \frac {1 } {\alpha }} \le   (- \alpha t  )^{- \frac {1 } {\alpha }} . \label{fEUZ2}
\end{gather} 
Moreover,
\begin{equation} \label{fEUZ2:2}
\partial _t  | U_0 | = f(  |U_0| ) >  0 
\end{equation} 
and
\begin{equation} \label{fEUZ3}
A(x) \le  | U_0 |^{-\alpha } .
\end{equation} 

\subsection{Estimates of $U_0$}

\begin{lem} \label{eEstUz} 
Assume~\eqref{fCL1}, \eqref{feEstUz:1:3}, \eqref{on:A} and let
$U_0$ be given by~\eqref{fDUZ1}. If $p\ge 1$ then
\begin{equation}  \label{feEstUz:2} 
 \| U_0 (t)\| _{ L^p } \lesssim  (-t)^{-\frac {1} {\alpha }},
\end{equation} 
for  $-1 \le t<0$.
In addition, for every $\rho \in \R$, $ \ell \in \N$ and $ | \beta |\le k-1$,
\begin{align}  
 | \partial _t^\ell \partial _x^\beta U_0 |  & \lesssim  |U_0 |^{1 + \ell \alpha + \frac {\alpha } {k} |\beta |} \lesssim (-t)^{ - \ell  -\frac { |\beta |} {k} }  |U_0| , \label{feEstUz:1:0} \\
 | \partial _x^\beta ( | U_0|^\rho  )|  & \lesssim  |U_0 |^{\rho + \frac {\alpha } {k} |\beta |} \lesssim (-t)^{ -\frac { |\beta |} {k} }  |U_0|^\rho , \label{feEstUz:1:1} \\
 | \partial _x^\beta ( | U_0|^{\rho -1} U_0 )| & \lesssim  |U_0 |^{\rho + \frac {\alpha } {k} |\beta |} \lesssim (-t)^{ -\frac { |\beta |} {k} }  |U_0|^\rho , \label{feEstUz:1:2} 
\end{align} 
for $x\in \R^N $,  $t<0$, and
\begin{equation} \label{fUzHk1} 
U_0 \in C^\infty  ( (-\infty ,0), H^{k-1} (\R^N ) ).
\end{equation} 
Furthermore, for any $x_0\in \R^N$ such that $A(x_0)=0$, for any $r>0$, $-1 \le t<0$ and $1\le p\le \infty $,
\begin{equation}\label{e:24}
(- t)^{-\frac 1{\alpha }+\frac N{pk}}\lesssim \|U_0(t)\|_{L^p (|x-x_0|<r)} ,
\end{equation}
where the implicit constant in \eqref{e:24} depend on $r$ and $p$.
\end{lem}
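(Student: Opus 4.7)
The plan is to read each estimate directly off the explicit form $U_0 = \sigma^{-1/\alpha - i\IL/\alpha}$ with $\sigma(t,x) = -\alpha t + A(x) > 0$, handling the derivative estimates first, then the $L^p$ bound and Sobolev regularity, and finally the local lower bound, which is the most delicate piece.

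For \eqref{feEstUz:1:0}--\eqref{feEstUz:1:2}, I would apply the Fa\`a di Bruno formula recalled in Section~\ref{ssFdB}. Since $\partial_t\sigma = -\alpha$ is constant in $x$ and all higher time derivatives of $\sigma$ vanish, I would first observe $\partial_t^\ell U_0 = c_\ell\,\sigma^{-1/\alpha - i\IL/\alpha - \ell}$; applying $\partial_x^\beta$ then expresses $\partial_t^\ell\partial_x^\beta U_0$ as a sum of terms $\sigma^{-1/\alpha - i\IL/\alpha - \ell - \nu}\prod (\partial_x^{\beta_\ell} A)^{\nu_\ell}$ with $\sum \nu_\ell = \nu \ge 1$ and $\sum \nu_\ell\beta_\ell = \beta$. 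The assumption $|\partial_x^{\beta_\ell}A|\lesssim A^{1-|\beta_\ell|/k}$ bounds each term by $\sigma^{-1/\alpha - \ell - \nu} A^{\nu - |\beta|/k}$, and since $|\beta|\le k-1 < k\le k\nu$ the exponent $\nu - |\beta|/k$ is positive, so the pointwise inequality $A\le \sigma$ collapses the product to $\sigma^{-1/\alpha - \ell - |\beta|/k} = |U_0|^{1 + \ell\alpha + \alpha|\beta|/k}$. The companion bound in $(-t)$ then follows from $\sigma \ge -\alpha t$. The same template with $\varphi(s) = s^{-\rho/\alpha}$ or $s^{-\rho/\alpha - i\IL/\alpha}$ yields \eqref{feEstUz:1:1} and~\eqref{feEstUz:1:2}.

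For \eqref{feEstUz:2}, I would split the integration at $|x|=2$: on the ball $|x|\le 2$ the uniform pointwise bound $|U_0|\le (-\alpha t)^{-1/\alpha}$ is enough, while on $|x|\ge 2$ the explicit form $A(x)=|x|^k$ invites the scaling $y = (-\alpha t)^{-1/k}x$, producing
\begin{equation*}
\int_{|x|\ge 2}\sigma^{-p/\alpha}\,dx = (-\alpha t)^{\frac{N}{k} - \frac{p}{\alpha}}\int_{|y|\ge 2(-\alpha t)^{-1/k}}(1+|y|^k)^{-p/\alpha}\,dy,
\end{equation*}
whose outer integral is uniformly finite since $kp/\alpha \ge k/\alpha > N$ by~\eqref{feEstUz:1:3}. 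Taking $p$-th roots and absorbing the positive power $(-t)^{N/(pk)}$ into the constant (using $-t\le 1$) gives~\eqref{feEstUz:2}. The regularity statement~\eqref{fUzHk1} is then immediate: the pointwise bounds control $|\partial_t^\ell\partial_x^\beta U_0|$ by a time-dependent constant times $|U_0|$, which combined with~\eqref{feEstUz:2} at $p=2$ places $\partial_t^\ell U_0(t)$ in $H^{k-1}$, and dominated convergence yields the smooth dependence on $t$.

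The main obstacle is the local lower bound~\eqref{e:24}, because \eqref{on:A} controls $A$ only in terms of itself, not by $|x-x_0|^k$. My plan is to bridge the two by a Bihari-type argument along rays: given a unit vector $\omega$, the function $h(s) = A(x_0+s\omega)$ satisfies $h(0)=0$ and $|h'|\le C\,h^{1-1/k}$, so on any maximal interval where $h>0$ one has $(h^{1/k})'\le C/k$; starting from a zero of $h$ (at worst $s=0$) this integrates to $h(s)\le (Cs/k)^k$, yielding $A(x)\le C'|x-x_0|^k$ globally. With this in hand, for $-t$ small enough that $(-\alpha t)^{1/k}\le r$, the ball $\{|x-x_0|\le (-\alpha t)^{1/k}\}$ lies inside the integration region and satisfies $\sigma\le (1+C')(-\alpha t)$, so $|U_0|^p \gtrsim (-\alpha t)^{-p/\alpha}$; integrating over volume $\sim (-\alpha t)^{N/k}$ and taking $p$-th roots produces the exponent $-1/\alpha + N/(pk)$. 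For $-t$ in the complementary compact range the inequality reduces to a continuity-compactness argument, and for $p=\infty$ it is immediate from $\|U_0(t)\|_{L^\infty(|x-x_0|<r)}\ge |U_0(t,x_0)| = (-\alpha t)^{-1/\alpha}$. I expect this Bihari step together with the scaling bookkeeping to be the main technical points; everything else is direct computation.
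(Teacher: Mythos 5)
Your proposal is correct and follows essentially the same route as the paper: every estimate is read off the explicit formula $U_0=W^{-\frac1\alpha - i\frac{\IL}{\alpha}}$, $W=-\alpha t+A$, via Fa\`a di Bruno together with $|\partial_x^\gamma A|\lesssim A^{1-\frac{|\gamma|}{k}}\le W^{1-\frac{|\gamma|}{k}}$, the $L^p$ bound by splitting at $|x|=2$, and the lower bound~\eqref{e:24} from $A(x)\le C(r)|x-x_0|^k$ plus the scaling computation. The only minor deviations are that the paper applies the real-valued Fa\`a di Bruno formula of Section~\ref{ssFdB} to $|U_0|^\rho$ and to the phase factors $\cos,\sin\bigl(\frac{\IL}{\alpha}\log W\bigr)$ separately rather than directly to the complex power (your direct application is equally valid, componentwise), and your Bihari-type argument along rays, giving $(h^{1/k})'\le C/k$ from a zero of $h$, is a clean justification of the bound $A(x)\le C(r)|x-x_0|^k$ that the paper merely asserts as an easy consequence of~\eqref{on:A}.
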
 

\begin{proof} 
Estimate~\eqref{feEstUz:2} is a consequence of~\eqref{fEUZ2}, \eqref{on:A} and~\eqref{feEstUz:1:3}. Indeed,
\begin{equation*} 
\int  _{ \R^N  }  |U_0|^p \le \int  _{  |x| <2 } |U_0|^p + \int  _{  |x| >2 } |U_0|^p \lesssim (-t)^{-  \frac {p } {\alpha }}+  \int  _{  |x| >2 }  |x|^{- \frac {pk} {\alpha }}  \lesssim (-t)^{-  \frac {p} {\alpha }}.
\end{equation*} 

We now set 
\begin{equation*} 
U_0= W^{-\frac {1} {\alpha }- i \frac {\IL} {\alpha }}  \text{ where }  W= - \alpha t + A (x) >0,
\end{equation*} 
and we prove~\eqref{feEstUz:1:1}-\eqref{feEstUz:1:2}. We let $ |\beta |\ge 1$, and we write $ |U_0|^\rho = \varphi ( W )$ with $\varphi (s)= s^{- \frac {\rho } {\alpha }}$.
We see that $ |\varphi ^{(\nu)} (s) | \lesssim  s^{- \frac {\rho } {\alpha }- \nu}$ for $s>0$. Moreover, if $1\le  |\gamma |\le  |\beta |$, then
\begin{equation}  \label{feEstUz:3} 
 |\partial _x^{\gamma } W | \lesssim  |\partial _x^{\gamma } A| \lesssim A^{1-\frac {| \gamma |}{k}}
 \lesssim W^{1-\frac {| \gamma |}{k}},
\end{equation} 
where we used~\eqref{on:A}, $A\le W$, and $ 1 -\frac {| \gamma   |}{k} \ge  1 -\frac {| \beta  |}{k} \ge \frac {1} {k}>0$. 
By the Fa\`a di Bruno formula (see Section~\ref{ssFdB}), we deduce that $  | \partial _x^\beta ( | U_0|^\rho  )|  $ is estimated by a sum of terms of the form 
\begin{equation*} 
B= W ^{- \frac {\rho } {\alpha }- \nu}  \prod _{ \ell =1 }^{ |\beta |} W^{(1-\frac {| \beta _\ell |}{k}) \nu _\ell} ,
\end{equation*} 
with appropriate coefficients, where $\nu \in \{1, \cdots,  |\beta | \}$, $\nu _\ell \ge 0$, $\displaystyle \sum  _{ \ell=1 }^{  |\beta | }\nu_\ell = \nu $, $\displaystyle \sum _{ \ell=1 }^{  |\beta | } \nu _\ell \beta _\ell = \beta  $.
It follows that
\begin{equation*} 
B = W ^{- \frac {\rho } {\alpha }- \nu}   W^{\nu -\frac {| \beta  |}{k} } =  W ^{- \frac {\rho } {\alpha } -\frac {| \beta  |}{k}} =   |U_0 |^{\rho + \frac {\alpha } {k} |\beta |} ,
\end{equation*} 
and~\eqref{feEstUz:1:1} follows.

Now we claim that if $1 \le | \widetilde{\beta } | \le k-1$, then
\begin{equation}  \label{feEstUz:2b} 
  | \partial _x^{ \widetilde{\beta } }U_0 |  \lesssim  |U_0 |^{ 1+ \frac {\alpha } {k} |\widetilde{\beta }  |} .
\end{equation} 
Indeed, we write
\begin{equation}  \label{feEstUz:2:2} 
U_0 =  |U_0|  \Bigl(  \cos  \Bigl( \frac {\IL } {\alpha } \log W \Bigr) + i \sin  \Bigl( \frac {\IL } {\alpha } \log W \Bigr) \Bigr)
\end{equation} 
Since $(\log s)^{(\nu )} \lesssim s^{- \nu}$ for $\nu\ge 1$, it follows easily from Fa\`a di Bruno's formula and~\eqref{feEstUz:3} that 
\begin{equation}  \label{feEstUz:4} 
 |\partial _x^\gamma (\log W)| \lesssim W^{- \frac { |\gamma |} {k}}
\end{equation} 
if $1\le  |\gamma |\le  |\widetilde{\beta }  |$. 
Using again Fa\`a di Bruno's formula together with~\eqref{feEstUz:4}, we obtain
\begin{equation}  \label{feEstUz:5} 
 \Bigl| \partial _x^\gamma  \Bigl(  \cos  \Bigl( \frac {\IL } {\alpha } \log W \Bigr) \Bigr) \Bigr| 
 +  \Bigl| \partial _x^\gamma  \Bigl(  \sin  \Bigl( \frac {\IL } {\alpha } \log W \Bigr) \Bigr) \Bigr| \lesssim W^{- \frac { |\gamma |} {k}}
\end{equation} 
if $1\le  |\gamma |\le  |\widetilde{\beta }  |$. 
Estimate~\eqref{feEstUz:2b} follows from~\eqref{feEstUz:2:2}, \eqref{feEstUz:1:1}  with $\rho =1$, \eqref{feEstUz:5}, and Leibnitz's formula.   
 
Estimate~\eqref{feEstUz:1:2}  follows from~\eqref{feEstUz:1:1}, \eqref{feEstUz:2b}, and Leibnitz's formula.

To prove~\eqref{feEstUz:1:0}, we observe that $\partial _t^\ell U_0 = c W^{ - \frac {1} {\alpha }- \ell - i \frac {\IL} {\alpha }}$ for some constant $c$. Therefore,  $\partial _t^\ell U_0 = c  \widetilde{U}_0   $, where $ \widetilde{U}_0= W^{ - {1} /{ \widetilde{\alpha } } - i  { \widetilde{\IL} } / { \widetilde{\alpha } } } $ with $ \widetilde{\alpha } = \frac {\alpha } {1+ \ell \alpha }$ and $ \widetilde{\IL } = \frac {\IL } {1+ \ell \alpha }$. 
In particular, $  | \widetilde{U}_0 | =  |U_0|^{1+ \ell \alpha }$. Applying formula~\eqref{feEstUz:2b} with $\alpha $ and $\IL$ replaced by $ \widetilde{\alpha } $ and  $ \widetilde{\IL} $, we obtain
\begin{equation*} 
  | \partial _x^{ {\beta } } \widetilde{U} _0 |  \lesssim  | \widetilde{U} _0 |^{ 1+ \frac { \widetilde{\alpha}  } {k} |{\beta }  |} =  |U_0|^{ 1+ \ell \alpha + \frac {\alpha } {k}  |\beta |},
\end{equation*} 
from which~\eqref{feEstUz:1:0} follows.

Property~\eqref{fUzHk1} is an immediate consequence of~\eqref{fEUZ1:b1}, \eqref{feEstUz:1:0} and~\eqref{feEstUz:2}. 

Finally, we prove~\eqref{e:24}. Let $x_0\in \R^N$ be such that $A(x_0)=0$ and $r>0$.
We have $ |U_0 ( t, x_0)| = (-\alpha t)^{-\frac {1} {\alpha }}$, and~\eqref{e:24} follows in the case $p= \infty $. We now assume $p<\infty $. 
Since $A$ is piecewise $C^k$, it follows easily from~\eqref{on:A} that for any $x$ such that $|x-x_0|<r$, we have $|A(x)|\leq C(r) |x-x_0|^k$; and so
\begin{equation*} 
 |U_0 (t, x)|^p \gtrsim (-  t +  |x-x_0|^k )^{- \frac {p } {\alpha }}
\end{equation*} 
Estimate~\eqref{e:24} then follows from
\begin{equation} \label{label4}
\begin{split} 
\int_{|y |<r} ( - t +| y |^k)^{-\frac{ p }{\alpha }} dy&=
 (- t)^{-\frac {p} {\alpha }+\frac Nk} \int_{|z|< r (- t) ^{-\frac1k}} (1+|z|^k)^{-\frac{p}{\alpha }} dz\\
&\gtrsim
(-t )^{-\frac {p} {\alpha }+\frac Nk}\int_{|z|< r} (1+|z|^k)^{-\frac{p}{\alpha }} dz
\gtrsim (- t) ^{-\frac {p} {\alpha }+\frac Nk}.
\end{split} 
\end{equation} 

This completes the proof.
\end{proof} 

\subsection{The refined blow-up ansatz} \label{ssrefined} 

 We consider the linearization of equation~\eqref{fEUZ1} 
 \begin{equation} \label{fRA1} 
 \partial _t w = \lambda \frac {\alpha +2} {2}  |U_0|^\alpha w +  \lambda \frac {\alpha } {2}  |U_0|^{\alpha -2} U_0^2  \overline{w} 
 = \lambda df( U_0) w,
 \end{equation}  
 where $df$ is defined by~\eqref{fDerf2}. 
Equation~\eqref{fRA1}  has the two solutions $w= iU_0$ and $w= \partial _t U_0$, i.e. $w= \lambda  |U_0|^\alpha U_0$.
Moreover, $\partial _t (  |U_0|^\alpha U _0) = (\lambda +\alpha )  |U_0|^{2\alpha } U_0$.
Elementary calculations show that for suitable $G$, the corresponding nonhomogeneous equation 
 \begin{equation} \label{fRA2} 
 \partial _t w =  \lambda df(U_0) w + G
 \end{equation}
has the solution $w= \Foru (G) $, where
\begin{equation}  \label{fRA3} 
\begin{split} 
\Foru ( G)  = & |U_0|^\alpha U_0 \int _0^t [  |U_0|^{-\alpha -2} \Re ( \overline{U_0} G ) ] (s) \, ds
+ i  U_0 \int _0^t [  |U_0|^{-2} \Im ( \overline{U_0} G ) ] (s) \, ds \\
& + i \alpha \IL  U_0 \int _0^t   |U_0(s)|^{2\alpha } \int _0^s [  |U_0|^{-\alpha -2} \Re ( \overline{U_0} G ) ] (\sigma ) \, d\sigma ds .
\end{split} 
\end{equation} 
We define $U_j, w_j, \Err _j$ by
\begin{equation}  \label{fRA5} 
w_0= iU_0,\quad \Err _0= -\partial _t U_0+ i \Delta U_0 + \lambda f( U_0) = i\Delta U_0,
\end{equation} 
and then recursively
\begin{equation}  \label{fRA6} 
w_j= \Foru (\Err  _{ j-1 }),\quad U_j= U _{ j-1 }+ w_j \quad \Err _j= -\partial _t U_j+ i \Delta U_j + \lambda f ( U_j ),
\end{equation}
for $j \ge 1$, as long as this makes sense. 
We will see that for $j\le \frac {k-4} {2}$, $\Foru (\Err  _{ j-1 })$ is well defined at each step,  on a sufficiently small time interval.
We have the following estimates.

\begin{lem} \label{eEstUJ} 
Assume~\eqref{fCL1}, \eqref{feEstUz:1:3}, \eqref{on:A}, and let $U_j, w_j, \Err _j$ be given by~\eqref{fDUZ1}, \eqref{fRA5} and~\eqref{fRA6}.  
There exists $-1\le T<0$ such that the following estimates hold for all $0\le j\le \frac {k-4} {2}$.
\begin{enumerate}[{\rm (i)}] 

\item \label{eEstUJ:1} 
If $0\le  |\beta |\le k-1- 2j$, then
\begin{equation} \label{feEstUJ:1}
 |\partial _x^\beta w_j| \lesssim (-t) ^{j(1- \frac {2} {k}) - \frac { |\beta |} {k}}  |U_0 | , \quad T\le t<0, x\in \R^N .
\end{equation} 

\item \label{eEstUJ:2} 
 If $0\le  |\beta |\le k-3- 2j$, then
\begin{equation} \label{feEstUJ:2}
 |\partial _x^\beta \Err _j| \lesssim (-t) ^{j(1- \frac {2} {k}) - \frac { |\beta |+2 } {k}}  |U_0 | , \quad T\le t<0, x\in \R^N .
\end{equation} 

\item \label{eEstUJ:3} 
If $0\le  |\beta |\le k-1-2j$, then
\begin{equation} \label{feEstUJ:3}
  |\partial _x^\beta (U_j - U_0) | \lesssim (-t)^{1- \frac { |\beta |+ 2} {k}}  |U_0| , \quad T\le t<0, x\in \R^N .
\end{equation} 
\end{enumerate} 

Moreover
\begin{equation} \label{feEstUJ:4}
\frac {1} {2}  |U_0| \le  | U _j| \le 2  |U_0|, \quad T\le t<0, x\in \R^N .
\end{equation} 
In addition, 
\begin{equation} \label{feEstUJ:5}
U_j \in C^1 ((T, 0), H^{ k-1-2j } (\R^N ) ) .
\end{equation} 

\end{lem}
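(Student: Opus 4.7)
The strategy is a simultaneous induction on $j\in\{0,1,\ldots,\lfloor(k-4)/2\rfloor\}$, proving \eqref{feEstUJ:1}, \eqref{feEstUJ:3}, then \eqref{feEstUJ:2} together with \eqref{feEstUJ:4}--\eqref{feEstUJ:5} at each level. The base case $j=0$ is immediate from Lemma~\ref{eEstUz}: since $w_0=iU_0$ and $\Err _0=i\Delta U_0$, both \eqref{feEstUJ:1} and \eqref{feEstUJ:2} reduce to \eqref{feEstUz:1:0} at $\ell=0$; \eqref{feEstUJ:3} is trivial ($U_0-U_0=0$); \eqref{feEstUJ:4} is an equality; and \eqref{feEstUJ:5} is \eqref{fUzHk1}.

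For the inductive step at $j\ge 1$, I first establish \eqref{feEstUJ:1} by applying $\partial_x^\beta$ (via Leibniz) to the explicit formula \eqref{fRA3} for $w_j=\Foru(\Err _{j-1})$. Each of the three summands is a product of a smooth prefactor (of the form $|U_0|^\alpha U_0$ or $U_0$, whose derivatives are controlled by \eqref{feEstUz:1:1}--\eqref{feEstUz:1:2}) with an integral in $s\in(t,0)$ of an expression built from negative powers of $|U_0(s,\cdot)|$ and $\Err _{j-1}(s,\cdot)$. Substituting the inductive bound \eqref{feEstUJ:2} for $\Err _{j-1}$ and using the monotonicity of $W(s,x)=-\alpha s+A(x)$ in $s\in(t,0)$, each $s$-integration yields an extra factor of $(-t)^{1-\frac{2}{k}}$, recovering the target exponent $j(1-\frac{2}{k})-\frac{|\beta|}{k}$.

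Having \eqref{feEstUJ:1}, the bound \eqref{feEstUJ:3} follows from $U_j-U_0=(U_{j-1}-U_0)+w_j$ combined with the inductive \eqref{feEstUJ:3} at level $j-1$ and the fact that $j(1-\frac{2}{k})\ge 1-\frac{2}{k}$. Then \eqref{feEstUJ:4} is a consequence of \eqref{feEstUJ:3} at $|\beta|=0$ once $T\in(-1,0)$ is taken close enough to $0$. For \eqref{feEstUJ:2}, the relation $\partial_t w_j=\lambda df(U_0)w_j+\Err _{j-1}$ together with the definition of $\Err _{j-1}$ yields
\begin{equation*}
\Err _j = i\Delta w_j + \lambda\bigl[f(U_{j-1}+w_j)-f(U_{j-1})-df(U_{j-1})w_j\bigr]+\lambda\bigl[df(U_{j-1})-df(U_0)\bigr]w_j.
\end{equation*}
The Laplacian term is controlled by \eqref{feEstUJ:1} applied to order $|\beta|+2$; the first bracket by the Taylor remainder estimate \eqref{fDerf5}; the second by \eqref{fDerf4:1:b1} combined with \eqref{feEstUJ:3} and \eqref{feEstUJ:4} at level $j-1$. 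Higher derivatives $\partial_x^\beta\Err _j$ are handled by Leibniz plus Fa\`a di Bruno (Section~\ref{ssFdB}) applied to the nonlinearity $|z|^\alpha z$. Finally, \eqref{feEstUJ:5} follows from the pointwise bounds, the integral representation of $w_j$, and the $L^p$-tail decay \eqref{feEstUz:2} of $U_0$ under assumption \eqref{on:A}.

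The main obstacle is the bookkeeping in step \eqref{feEstUJ:1}: distributing $\partial_x^\beta$ across the three summands of \eqref{fRA3} produces many terms, each of which must be matched against the single target exponent $j(1-\frac{2}{k})-\frac{|\beta|}{k}$. The key is to isolate, before starting the induction, an elementary integration lemma converting $\int_t^0 (-s)^a\,W(s,x)^b\,ds$ into an expression of the form $(-t)^{a+1}W(t,x)^b$ (valid for $a>-1$, using the monotonicity of $W$ in $s$ and the pointwise identity $|U_0|^{-\alpha}=W$); once this is in hand, combining it with \eqref{feEstUz:1:1}--\eqref{feEstUz:1:2} cleanly propagates the inductive bound from $\Err _{j-1}$ to $w_j$ at every derivative order $|\beta|\le k-1-2j$, and the factor-of-two loss in the derivative range per iteration is simply the cost of the two $x$-derivatives spent on $i\Delta w_j$ when passing from \eqref{feEstUJ:1} to \eqref{feEstUJ:2}.
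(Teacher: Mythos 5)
Your overall architecture (induction on $j$, Leibniz on the explicit formula \eqref{fRA3} with a time-integration lemma exploiting the monotonicity of $W=-\alpha t+A$, then $U_j-U_0=(U_{j-1}-U_0)+w_j$ for \eqref{feEstUJ:3}--\eqref{feEstUJ:4}) matches the paper, and the base case and the proof of \eqref{feEstUJ:1} are fine. The genuine gap is in your treatment of \eqref{feEstUJ:2}. You bound the bracket $f(U_{j-1}+w_j)-f(U_{j-1})-df(U_{j-1})w_j$ by the generic Taylor estimate \eqref{fDerf5} and the term $[df(U_{j-1})-df(U_0)]w_j$ by \eqref{fDerf4:1:b1}. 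When $\alpha\le 1$ we have $\Calpha=0$, so these inequalities only give gains of order $|w_j|^{\alpha+1}$ and $|U_{j-1}-U_0|^\alpha|w_j|$ respectively. Inserting \eqref{feEstUJ:1} and \eqref{feEstUJ:3}, the first yields $(-t)^{(\alpha+1)j(1-\frac2k)-1}|U_0|$ and the second $(-t)^{\alpha(1-\frac2k)-1+j(1-\frac2k)}|U_0|$, and neither is $\lesssim (-t)^{j(1-\frac2k)-\frac2k}|U_0|$ unless $\alpha\ge 1$ (check $j=1$: you would need $\alpha+1\ge 2$). So your induction does not close precisely in the regime $\alpha<1$ that the refined ansatz is designed for. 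The paper avoids this by exploiting that the perturbation $g_j(\theta)=U_{j-1}-U_0+\theta w_j$ satisfies $|g_j|\le\frac12|U_0|$, so that differences of the form $|U_0+g_j|^\alpha-|U_0|^\alpha$ gain a full \emph{linear} factor $|U_0|^{\alpha-1}|g_j|\lesssim(-t)^{1-\frac2k}|U_0|^\alpha$ even for $\alpha<1$; this is implemented through the explicit representations with $\eta=\int_0^1(\tau|U_0+g_j|^2+(1-\tau)|U_0|^2)^{\frac\alpha2-1}d\tau$ and the analogous $\widetilde\eta$, whose arguments are bounded below by $\frac14|U_0|^2$ (see \eqref{fRA10}).

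Relatedly, your remark that higher derivatives of $\Err_j$ are "handled by Leibniz plus Fa\`a di Bruno applied to $|z|^\alpha z$" skips the essential difficulty: if you differentiate $f(U_{j-1}+w_j)$, $f(U_{j-1})$ and $df(\cdot)w_j$ separately, you destroy the cancellation and obtain only $(-t)^{-1+j(1-\frac2k)}|U_0|$ at order $|\beta|$, far short of the required $(-t)^{j(1-\frac2k)-\frac{|\beta|+2}k}|U_0|$. One must differentiate a representation that retains the difference structure; this is exactly what the paper's $\eta,\widetilde\eta$ formulas provide, since Fa\`a di Bruno applied to $s\mapsto s^{\frac\alpha2-1}$ composed with $\tau|U_0+g_j|^2+(1-\tau)|U_0|^2\ge\frac14|U_0|^2$ gives \eqref{fRA12}--\eqref{fRA13}, and Leibniz then yields \eqref{fRA14}--\eqref{fRA15}. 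To repair your argument you should replace the appeal to \eqref{fDerf5} and \eqref{fDerf4:1:b1} by this (or an equivalent) quantitative linearization around $U_0$ valid for all $\alpha>0$ and stable under $\partial_x^\beta$ up to order $k-3-2j$. Your sketches of \eqref{feEstUJ:4} and \eqref{feEstUJ:5} are acceptable, though \eqref{feEstUJ:5} is most cleanly obtained, as in the paper, by propagating $w_j\in C^1((T,0),H^{k-1-2j})$ and $\Err_j\in C((T,0),H^{k-3-2j})$ through the ODE $\partial_t w_j=\lambda df(U_0)w_j+\Err_{j-1}$.
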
 

\begin{proof}
For $j=0$, estimates~\eqref{feEstUJ:1} and~\eqref{feEstUJ:2} are immediate consequences of~\eqref{feEstUz:1:2} and~\eqref{fEUZ2} (for $T=-\infty $), and estimates~\eqref{feEstUJ:3} and~\eqref{feEstUJ:4} are trivial. 
We now proceed by induction on $j$. 
We assume that for some $ 1\le  j \le \frac {k-4} {2}$, estimates~\eqref{feEstUJ:1}--\eqref{feEstUJ:4} hold for $0, \cdots, j-1$,
 and we prove estimates~\eqref{feEstUJ:1}--\eqref{feEstUJ:4} for $j$, by possibly assuming $T$ smaller.

 {\sl Proof of~\eqref{feEstUJ:1}}. 
 Let $0\le  |\beta |\le k-1-2j$.
 Given $\rho \in \R$, it follows from Leibnitz's formula, \eqref{feEstUz:1:2}, and~\eqref{feEstUJ:2} for $j-1$ that
\begin{equation*} 
\begin{split} 
 |\partial _x^\beta ( |U_0|^\rho  \overline{U_0}  \Err  _{ j-1 })| &  \lesssim \sum_{ \beta _1+ \beta _2 =\beta  }  |\partial _x^{\beta_1} ( |U_0|^\rho  U_0)| \, |\partial _x^{\beta_2} \Err  _{ j-1 }| \\ &  \lesssim \sum_{ \beta _1+ \beta _2 =\beta  } (-t)^{ -\frac { |\beta _1|} {k} }  |U_0|^{\rho+1} (-t) ^{(j-1)(1- \frac {2} {k}) - \frac { |\beta _2|+2 } {k}}  |U_0 |  \\ &  \lesssim   (-t)^{  j(1- \frac {2} {k}) - \frac { |\beta | } {k}-1 }  |U_0|^{\rho+2} .
\end{split} 
\end{equation*} 
Integrating on $(t, 0)$ for $t\in (T,0)$, and using that $ |U_0|^{-1}$ is a decreasing function of $t$ by~\eqref{fEUZ2:2}, we see that if $\rho +2\le 0$, then
\begin{equation*} 
 \Bigl| \partial _x^\beta \int _0^t |U_0|^\rho  \overline{U_0}  \Err  _{ j-1 } \,ds \Bigr|  \lesssim  | U_0 (t)|^{\rho +2} \int _t^0 (-s)^{  j(1- \frac {2} {k}) - \frac { |\beta | } {k}-1 } ds.
\end{equation*} 
Note that $j\ge 1$ and $\frac { |\beta |} {k} \le 1- \frac {2j+1} {k}$, so that $j(1- \frac {2} {k}) - \frac { |\beta | } {k} \ge 1- \frac {2} {k} - \frac { |\beta | } {k} >0$; and so,
\begin{equation} \label{fRA7} 
 \Bigl| \partial _x^\beta \int _0^t |U_0|^\rho  \overline{U_0}  \Err  _{ j-1 } \,ds \Bigr|  \lesssim  (-t)^{  j(1- \frac {2} {k}) - \frac { |\beta | } {k} }  | U_0 (t)|^{\rho +2}  .
\end{equation} 
It follows from Leibnitz's formula, \eqref{feEstUz:1:2}, \eqref{fRA7}, \eqref{feEstUz:1:1} and~\eqref{fEUZ2} that
\begin{equation*} 
\begin{split} 
 |\partial _x^\beta w_j|  \lesssim & \sum_{ \beta _1+\beta _2= \beta  } (-t)^{ -\frac { |\beta _1|} {k} }  |U_0|^{\alpha +1}   (-t)^{  j(1- \frac {2} {k}) - \frac { |\beta _2| } {k} }  | U_0 |^{- \alpha } \\ & + \sum_{ \beta _1+\beta _2= \beta  } (-t)^{ -\frac { |\beta _1|} {k} }  |U_0|   (-t)^{  j(1- \frac {2} {k}) - \frac { |\beta _2| } {k} }  \\ & + \sum_{ \beta _1+\beta _2+ \beta _3= \beta   }  (-t)^{ -\frac { |\beta _1|} {k} }  |U_0| \int _t^0 (-s)^{ _{ \frac { |\beta _2|} {k}-1 }}  |U_0 |^\alpha (-s)^{  j(1- \frac {2} {k}) - \frac { |\beta_3 | } {k} }  | U_0 |^{-\alpha } ds  \\  \lesssim & (-t)^{  j(1- \frac {2} {k}) - \frac { |\beta | } {k} }  | U_0 | \\ &+ \sum_{ \beta _1+\beta _2+ \beta _3= \beta   }  (-t)^{ -\frac { |\beta _1|} {k} }  |U_0| \int _t^0  (-s)^{  j(1- \frac {2} {k}) - \frac { |\beta _2+ \beta_3 | } {k} -1} ds\\ 
 \lesssim & (-t)^{  j(1- \frac {2} {k}) - \frac { |\beta | } {k} }  | U_0 |. 
\end{split} 
\end{equation*} 
This proves~\eqref{feEstUJ:1}.

 {\sl Proof of~\eqref{feEstUJ:3} and~\eqref{feEstUJ:4}}. 
Since $U_j- U_0 = w_j + U _{ j-1 }- U_0$, estimate~\eqref{feEstUJ:3} for $j$ follows from~\eqref{feEstUJ:1} for $j$ and~\eqref{feEstUJ:3} for $j-1$. 
Estimate~\eqref{feEstUJ:4} follows from~\eqref{feEstUJ:3} by possibly choosing $T>0$ smaller.
 
 {\sl Proof of~\eqref{feEstUJ:2}}. 
Since $U_j- U _{ j-1 } = w_j$,  it follows from~\eqref{fRA6} and the definition of $\Foru$ that
 \begin{equation*} 
 \begin{split} 
 \Err _j - \Err  _{ j-1 }& = -\partial _t w_j + i\Delta w_j + \lambda  f(U_j) - \lambda  f( U _{ j-1 }) \\ &
 = -\lambda df(U_0) w_j- \Err _{ j-1 } + i\Delta w_j + \lambda  f(U_j) - \lambda  f( U _{ j-1 }),
 \end{split} 
 \end{equation*}  
so that
\begin{equation*} 
\begin{split} 
 \Err _j = &  i\Delta w_j + \lambda  [ f(U _{ j-1 }+ w_j) - f( U _{ j-1 })- df(U _0 ) w_j ] \\
 = &:  A_1 +  \lambda  A_2  .
\end{split} 
\end{equation*} 
It follows from~\eqref{feEstUJ:1}  (for $j$) that if $ |\beta |\le k-3- 2j$ (so that $ |\beta | +2\le k-1- 2j$), then 
\begin{equation} \label{fRA8} 
 |\partial _x^\beta A_1 | \lesssim (-t) ^{j(1- \frac {2} {k}) - \frac { |\beta | +2} {k}}  |U_0 | .
\end{equation} 
We now estimate $A_2$, and we write
\begin{equation*} 
f(U _{ j-1 }+ w_j) - f( U _{ j-1 })  = \int _0^1 \frac {d} {d\theta } f( U_0+ g_j (\theta ) ) \, d\theta 
  = \int _0^1 df( U_0+g_j (\theta ) )w_j  \, d\theta ,
\end{equation*} 
where
\begin{equation*} 
g_j (\theta ) = U _{ j-1 }- U_0 + \theta w_j ,
\end{equation*} 
so that 
\begin{equation*} 
\begin{split} 
A_2  = & \int _0^1 [ df( U_0+ g_j (\theta ) )w_j - df (U_0) w_j ]
\, d\theta \\
= & \frac {\alpha +2} {2} \int _0^1 ( |U_0+ g_j (\theta ) |^\alpha - |U_0 |^\alpha ) w_j\, d\theta 
\\ & + \frac {\alpha } {2} \int _0^1 ( |U_0+ g_j (\theta ) |^{\alpha -2} (U_0+ g_j (\theta ))^2 - |U_0 |^{\alpha -2} U_0^2 )  \overline{w_j} \, d\theta .
\end{split} 
\end{equation*} 
We write
\begin{equation*} 
\begin{split} 
|U_0+ g_j (\theta ) |^\alpha - |U_0 |^\alpha & = \int _0^1 \frac {d } {d\tau }   ( \tau  |U_0+ g_j(\theta )|^2 + (1-\tau )  |U_0|^2 )^{\frac {\alpha } {2}} d\tau  \\ & = \frac {\alpha } {2} \eta [ |U_0+ g_j(\theta )|^2 - |U_0|^2 ] 
 \\ & = \frac {\alpha } {2} \eta [ 2\Re ( \overline{U_0} g_j(\theta )) + |g_j(\theta )|^2 ]
\end{split} 
\end{equation*} 
where
\begin{equation*} 
\eta = \int _0^1  ( \tau  |U_0+ g_j(\theta )|^2 + (1-\tau )  |U_0|^2 )^{\frac {\alpha } {2} -1 } d\tau .
\end{equation*} 
Similarly,
\begin{equation*} 
\begin{split} 
 |U_0+ & g_j (\theta ) |^{\alpha -2} (U_0  + g_j (\theta ))^2 - |U_0 |^{\alpha -2} U_0^2   \\
&= (  |U_0+ g_j (\theta ) |^{\alpha -2} -  |U_0 |^{\alpha -2}) (U_0+ g_j (\theta ))^2 + |U_0 |^{\alpha -2} (  (U_0+ g_j (\theta ))^2 -U_0^2 ) \\ & 
= \frac {\alpha -2} {2}  \widetilde{\eta }   [ 2\Re ( \overline{U_0} g_j(\theta )) + |g_j(\theta )|^2 ] + 
|U_0 |^{\alpha -2} (2U_0+ g_j (\theta ))g_j (\theta ) ,
\end{split} 
\end{equation*} 
where
\begin{equation*} 
 \widetilde{\eta}  = (U_0  + g_j (\theta ))^2 \int _0^1  ( \tau  |U_0+ g_j(\theta )|^2 + (1-\tau )  |U_0|^2 )^{\frac {\alpha  } {2} -2} d\tau  .
\end{equation*} 
Thus we may write
\begin{equation} \label{fRA9} 
\begin{split} 
A_2 = & \frac {\alpha (\alpha +2)} {4} \int _0^1 \eta [ 2\Re ( \overline{U_0} g_j(\theta )) + |g_j(\theta )|^2 ]w_j \, d\theta \\ &
+ \frac {\alpha (\alpha -2)} {4} \int _0^1 \widetilde{\eta }   [ 2\Re ( \overline{U_0} g_j(\theta )) + |g_j(\theta )|^2 ]  \overline{w_j} \, d\theta  \\ &+ \frac {\alpha } {2} \int _0^1 |U_0 |^{\alpha -2} (2U_0+ g_j (\theta ))g_j (\theta )  \overline{w_j} \, d\theta \\ = & \frac {\alpha (\alpha +2)} {4} B_1 +  \frac {\alpha (\alpha -2)} {4}B_2 + \frac {\alpha } {2} B_3 . 
\end{split} 
\end{equation} 
Using~\eqref{feEstUJ:1}, we obtain by choosing $T$ possibly smaller 
\begin{equation*} 
| g_j (\theta ) | \le \sum_{ \ell=1 }^j  |w_\ell | \le C (-t) ^{1- \frac {2} {k} }  |U_0 | \le \frac {1} {2}  |U_0|,
\end{equation*} 
so that 
\begin{equation} \label{fRA10} 
\frac {1} {4} |U_0|^2 \le  \tau  |U_0+ g_j(\theta )|^2 + (1-\tau )  |U_0|^2 \le 3  |U_0|^2
\end{equation} 
for all $0\le \tau, \theta  \le 1$. 
Applying~\eqref{feEstUz:1:1}, \eqref{feEstUz:1:2}, \eqref{feEstUJ:1}, and Leibnitz's formula, it is not difficult to show that 
if $ |\beta |\le k-3- 2j$ then
\begin{equation} \label{fRA11} 
 |\partial _x^\beta (  |U_0+ g_j(\theta )|^2 + (1-\tau )  |U_0|^2 ) | \lesssim  (-t)^{- \frac { |\beta |} {k}}  |U_0|^2 .
\end{equation} 
Using now~\eqref{fRA10}, \eqref{fRA11}, and the Fa\`a di Bruno formula, we deduce that  
\begin{equation} \label{fRA12} 
 | \partial _x ^\beta \eta | \lesssim (-t)^{- \frac {   |\beta  |} {k}}    |U_0|^{ \alpha - 2  } .
\end{equation} 
Similarly (using in addition Leibnitz's formula), we see that 
\begin{equation} \label{fRA13} 
 | \partial _x ^\beta  \widetilde{\eta}  | \lesssim (-t)^{- \frac {   |\beta  |} {k}}    |U_0|^{ \alpha - 2  } .
\end{equation} 
Next, we deduce from~\eqref{feEstUz:1:2}, \eqref{feEstUJ:1}, \eqref{fRA12}, \eqref{fRA13}, \eqref{fEUZ2}, and Leibnitz's formula that 
if $ |\beta |\le k-3- 2j$ then
\begin{equation} \label{fRA14} 
 |\partial _x^\beta B_1 | +  |\partial _x^\beta B_2 | \lesssim  (-t) ^{j(1- \frac {2} {k}) - \frac { |\beta |+2} {k}}  |U_0 | .
\end{equation}   
Using~\eqref{feEstUz:1:1} with $\rho = \alpha -2$, we obtain similarly
\begin{equation} \label{fRA15} 
 |\partial _x^\beta B_3 | \lesssim 
  (-t) ^{j(1- \frac {2} {k}) - \frac { |\beta |+2} {k}}  |U_0 | .
\end{equation}
Estimate~\eqref{feEstUJ:2} follows from~\eqref{fRA8}, \eqref{fRA9},  \eqref{fRA14} and~\eqref{fRA15}. 

Finally, we prove~\eqref{feEstUJ:5}.
For this, we prove by induction on $j$ that 
\begin{equation} \label{fRT1} 
U_j, w_j\in C^1((T,0), H^{k -1-2j}  (\R^N ) )  \text{ and } \Err _j \in C ((T,0), H^{k -3-2j}  (\R^N ) ).
\end{equation} 
For $j=0$, \eqref{fRT1} holds, by~\eqref{fUzHk1}. 
We assume that for some $ 1\le  j \le \frac {k-4} {2}$, property~\eqref{fRT1} holds for $ j-1$,
and we prove it for $j$. 
Let $t_0\in (T, 0)$. It follows from~\eqref{feEstUJ:1} and~\eqref{feEstUz:2} that $w_j (t_0) \in H^{k -1-2j} (\R^N ) $. 
Moreover, $\Err  _{ j-1 } \in C ((T,0), H^{k -1-2j}  (\R^N ) )$ by the induction assumption. 
Since $\partial _t w_j=    \lambda df(U_0) w_j + \Err  _{ j-1 }$, it is not difficult to prove (using Lemma~\ref{eEstUz} for the relevant estimates of $U_0$)  that $w_j \in  C^1((T,0), H^{k -1-2j}  (\R^N ) )  $. 
Hence $U_j \in  C^1((T,0), H^{k -1-2j}  (\R^N ) )  $, and by definition of $\Err _j$, we deduce that $\Err _j \in C ((T,0), H^{k -3-2j}  (\R^N ) )$. This proves~\eqref{fRT1}.
\end{proof} 

\section{Construction and estimates of approximate solutions} \label{sApprox} 

In this section, we construct a sequence $u_n$ of solutions of~\eqref{NLS1}, close to the ansatz $U_J$ of Section~\ref{sBUAnsatz}, which will eventually converge to the blowing-up solution  of Theorem~\ref{eThm1}. We estimate  $\varepsilon _n= u_n - U_J$ by an energy method. 
More precisely, we estimate $ (-t)^{- \sigma } \| \varepsilon _n \| _{ L^2 } + (-t)^{ - (1 -\theta )\sigma  } \| \nabla \varepsilon _n \| _{ L^2 } $ for some appropriate parameters $\sigma \ge 0$ and $0\le \theta \le 1$.
This parameter $\sigma $ is taken large enough to avoid unnecessary condition on $\lambda $, see~\eqref{fCite1} and~\eqref{rfAPE17}. 
Moreover, the parameter $J$ of the ansatz $U_J$ is chosen sufficiently large to absorb the singularity $(-t)^{- \sigma }$, see~\eqref{fCite2} and~\eqref{rfAPE7}.

We now go into details. We define $\sigma , \theta >0$ by
\begin{align} 
\sigma &= \max \{ 2^{\alpha +1}\alpha ^{-1} |\lambda | \Csu , 2 \alpha  |\lambda | (8 \Csu  |\lambda |)^{\alpha } \} , \label{feApprox1:3} \\
 \theta &= \min \Bigl\{ \frac {1} {2}, \frac {2} {N}, \frac {1} {N\alpha +1}, \frac {1} {3 \alpha ^2} \Bigr\}  , \label{feApprox1:5}
\end{align} 
where $\Csu$ is given by Lemma~$\ref{eANL1}$, and we set 
\begin{gather} 
J = \Bigl[  \frac {2} {\alpha } + 4 \sigma \Bigr] + 1 , \label{feApprox1:6} \\
k = \max  \Bigl\{   2 J + 4 , \frac {4} {\theta \sigma }, 2 N\alpha \Bigr\} .  \label{feApprox1:7}
 \end{gather} 
 In particular, $k$ satisfies~\eqref{feEstUz:1:3}. 
We let $A\in C^{k-1} (\R^N , \R)$ be piecewise of class $C^k$ and satisfy~\eqref{on:A},
and we consider the ansatz $U_J$ constructed in Section~\ref{sBUAnsatz}, and $T <0$ given by Lemma~\ref{eEstUJ}.
(This is possible since $2J \le k-4$ by~\eqref{feApprox1:7}.)
For $n > - \frac {1} {T}$, we set
\begin{equation*} 
T_n= - \frac {1} {n}\in  (T, 0). 
\end{equation*} 
Since $U_J  (T_n) \in H^2 (\R^N ) $ (by~\eqref{feEstUJ:5} and~\eqref{feApprox1:7}) it follows that there exist $s _n < T_n$ and a unique solution $u_n\in C( (s _n, T_n] , H^2 (\R^N ) ) \cap C^1( (s _n, T_n] , L^2 (\R^N ) )$ of
\begin{equation} \label{fAP1} 
\begin{cases} 
\partial _t u_n = i\Delta u_n + \lambda f  (u_n) \\  u_n (T_n) = U_J(T_n) ,
\end{cases} 
\end{equation} 
defined on the maximal interval $( s _n, T_n]$, with the blow-up alternative that if $s _n >-\infty $, then
\begin{equation} \label{fBUA1} 
  \| \nabla u_n (t)  \| _{ L^2 } \goto _{ t \downarrow s _n} \infty .  
\end{equation} 
See~\cite{Kato1}.

We let $\varepsilon _n \in C( ( \max \{ s_n, T \}, T_n], H^2 (\R^N ) ) \cap C^1( ( \max \{ s_n, T \}, T_n], L^2 (\R^N ) )$ be defined by
\begin{equation} \label{fDfnen} 
u_n = U_J + \varepsilon _n 
\end{equation} 
and we have the following estimate.

\begin{prop} \label{eApprox1} 
Assume~\eqref{feApprox1:3},  \eqref{feApprox1:5}, \eqref{feApprox1:6} and~\eqref{feApprox1:7}.
If $\varepsilon _n$ is given by~\eqref{fDfnen}, then 
there exist  $T\le S < 0$ and $n_0 >  - \frac {1} { S}$ such that
\begin{equation} \label{feApprox1:10} 
s_n < S, \quad  \text{for all } n\ge n_0.
\end{equation} 
Moreover,
\begin{gather} 
 \| \varepsilon _n (t) \| _{ L^2  } \le (-t)^\sigma \label{feApprox1:1} \\ 
  \| \nabla \varepsilon _n (t) \| _{ L^2  } \le (-t)^{(1-\theta )\sigma } \label{feApprox1:2} 
\end{gather} 
for all $n\ge n_0$ and $S \le t\le T_n$, and
\begin{equation} \label{feApprox1:9} 
\int  _S ^{T_n} \int  _{ \R^N  }  |\varepsilon _n |^\alpha  |\nabla \varepsilon _n |^2 \le 1. 
\end{equation} 
\end{prop}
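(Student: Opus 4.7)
The plan is a backward-in-time bootstrap argument on $\varepsilon_n = u_n - U_J$. Subtracting the equations satisfied by $u_n$ and $U_J$ and using the definition $\Err_J = -\partial_t U_J + i\Delta U_J + \lambda f(U_J)$ gives
\begin{equation*}
\partial_t \varepsilon_n = i\Delta \varepsilon_n + \lambda \bigl[ f(U_J+\varepsilon_n) - f(U_J) \bigr] + \Err_J,
\end{equation*}
with $\varepsilon_n(T_n) = 0$. I introduce
\begin{equation*}
\tau_n = \inf\bigl\{ \tau \in (\max\{s_n,T\}, T_n] : \|\varepsilon_n(s)\|_{L^2} \le (-s)^\sigma \text{ and } \|\nabla\varepsilon_n(s)\|_{L^2} \le (-s)^{(1-\theta)\sigma} \text{ on } [\tau,T_n]\bigr\}
\end{equation*}
and aim to strictly improve both bounds (to, say, half of their right-hand sides) on $[\tau_n, T_n]$ once $|T_n|$ is small; continuity then forces $\tau_n = \max\{s_n,T\}$, and the blow-up alternative~\eqref{fBUA1} combined with the improved estimates rules out $s_n \ge S$ for a suitable $S\in(T,0)$.

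For the $L^2$ estimate, pairing with $\bar\varepsilon_n$, integrating, and taking real parts eliminates the $i\Delta\varepsilon_n$ term. Estimate~\eqref{feANL1:2} together with $|U_J| \le 2|U_0| \le 2(-\alpha t)^{-1/\alpha}$ (from~\eqref{feEstUJ:4} and~\eqref{fEUZ2}) produces the principal contribution $2\Csu|\lambda|\int|U_J|^\alpha|\varepsilon_n|^2 \le 2^{\alpha+1}\Csu|\lambda|\alpha^{-1}(-t)^{-1}\|\varepsilon_n\|_{L^2}^2 \le \sigma(-t)^{-1}\|\varepsilon_n\|_{L^2}^2$ by the first clause of~\eqref{feApprox1:3}. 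The superlinear remainder $\Csu|\lambda|\int|\varepsilon_n|^{\alpha+2}$ is controlled via Gagliardo--Nirenberg and the bootstrap, giving a term of order $(-t)^{\sigma(\alpha+2)-\theta\sigma N\alpha/2}$ whose exponent strictly exceeds $2\sigma-1$ thanks to $\theta \le 2/N$. Finally,~\eqref{feEstUJ:2} and~\eqref{feEstUz:2} yield $\|\Err_J\|_{L^2}\lesssim(-t)^{J(1-2/k)-2/k-1/\alpha}$, which is $\ll(-t)^\sigma$ since $J \ge 2/\alpha + 4\sigma$. Integrating from $t$ to $T_n$ gives $\|\varepsilon_n(t)\|_{L^2}^2 \le \tfrac12(-t)^{2\sigma} + o((-t)^{2\sigma})$, improving~\eqref{feApprox1:1}.

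The gradient estimate is the main obstacle. Differentiating the equation, pairing with $\nabla\bar\varepsilon_n$, and taking real parts yields
\begin{equation*}
\tfrac12\tfrac{d}{dt}\|\nabla\varepsilon_n\|_{L^2}^2 = \Re\int \lambda \nabla\bigl[f(U_J+\varepsilon_n) - f(U_J)\bigr]\cdot\nabla\bar\varepsilon_n + \Re\int \nabla\Err_J\cdot\nabla\bar\varepsilon_n.
\end{equation*}
Writing $f(U_J+\varepsilon_n) - f(U_J) = \bigl[f(U_J+\varepsilon_n) - f(U_J) - f(\varepsilon_n)\bigr] + f(\varepsilon_n)$, the mixed part is controlled by~\eqref{fDerf6}, Lemma~\ref{eEstUz} for the $U_J$ derivatives, and the bootstrap. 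The delicate piece is $\Re\int\lambda\nabla f(\varepsilon_n)\cdot\nabla\bar\varepsilon_n$: for real $\lambda=1$, identity~\eqref{fDerf2:b1} produces the favorable dissipation $\ge \int|\varepsilon_n|^\alpha|\nabla\varepsilon_n|^2$, but for general $\lambda\in\C$ with large $\IL$ the coefficient in front can flip sign, and only the a priori upper bound $(\alpha+1)|\lambda|\int|\varepsilon_n|^\alpha|\nabla\varepsilon_n|^2$ is available. I would absorb this via the Sobolev-type inequality~\eqref{fDerf7} with a small parameter $\eta$, trading $\int|\varepsilon_n|^{2\alpha+2}$ for $\eta\int|\varepsilon_n|^\alpha|\nabla\varepsilon_n|^2 + C_\eta\int|\varepsilon_n|^{\alpha+2}$, and handling the latter two via Gagliardo--Nirenberg and the bootstrap. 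The second clause of~\eqref{feApprox1:3}, $\sigma\ge 2\alpha|\lambda|(8\Csu|\lambda|)^\alpha$, together with $\theta\le 1/(N\alpha+1)$ and $\theta\le 1/(3\alpha^2)$ from~\eqref{feApprox1:5}, is calibrated precisely so that every remainder carries an exponent strictly larger than $2(1-\theta)\sigma-1$.

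Once both bootstraps are closed on $[\tau_n,T_n]$ with strict improvement, $\tau_n = \max\{s_n,T\}$; since the improvement constants depend only on the data, one can fix $S\in(T,0)$ with $|S|$ small enough that $\tau_n \le S$ for all $n\ge n_0$. If $s_n \ge S$ then $\max\{s_n,T\}=s_n$, and the bound on $\|\nabla\varepsilon_n\|_{L^2}$ together with~\eqref{feEstUJ:5} for $U_J$ would give $\|\nabla u_n(t)\|_{L^2}$ bounded as $t\downarrow s_n$, contradicting~\eqref{fBUA1}. Hence $s_n<S$. Finally,~\eqref{feApprox1:9} follows by integrating on $[S,T_n]$ the contributions of $\int|\varepsilon_n|^\alpha|\nabla\varepsilon_n|^2$ produced along the gradient estimate and invoking the already-established pointwise bounds on $\|\varepsilon_n\|_{L^2}$, $\|\nabla\varepsilon_n\|_{L^2}$, and $\|\Err_J\|_{H^1}$; the resulting universal constant is made $\le 1$ by one last shrinkage of $|S|$.
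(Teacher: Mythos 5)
Your overall scheme (bootstrap backward in time from $\varepsilon_n(T_n)=0$, close the $L^2$ bound using the first clause of~\eqref{feApprox1:3}, close the gradient bound using the second clause, then use the blow-up alternative to get $s_n<S$) matches the paper, and your $L^2$ step is sound: substituting the bootstrap bound into the linear term $2\Csu|\lambda|\int|U_J|^\alpha|\varepsilon_n|^2\le\sigma(-t)^{-1}\|\varepsilon_n\|_{L^2}^2$ and integrating indeed returns the factor $\sigma/(2\sigma)=\tfrac12$, which is equivalent to the paper's weighted Gronwall with $(-t)^{-\sigma}$.

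The genuine gap is in the gradient step, at exactly the point you flag as delicate. Having paired the equation with $\nabla\bar\varepsilon_n$ only, the term $\Re\int\lambda\,\nabla f(\varepsilon_n)\cdot\nabla\bar\varepsilon_n$ is, for $\IL\neq0$, a term of size $(\alpha+1)|\lambda|\int|\varepsilon_n|^\alpha|\nabla\varepsilon_n|^2$ with no sign, and your proposed remedy does not work: inequality~\eqref{fDerf7} bounds $\int|\varepsilon_n|^{2\alpha+2}$ by $\eta\int|\varepsilon_n|^\alpha|\nabla\varepsilon_n|^2+C_\eta\int|\varepsilon_n|^{\alpha+2}$, i.e.\ it goes the wrong way — it gives you nothing with which to absorb a quantity proportional to $\int|\varepsilon_n|^\alpha|\nabla\varepsilon_n|^2$ itself, and the bootstrap hypotheses give no pointwise-in-time smallness of that quantity (only its time integral is ever controlled, and in the paper that control is a \emph{consequence} of the sign-definite dissipation you are missing). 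Moreover the coefficient $(\alpha+1)|\lambda|$ is not small, so no small-parameter absorption is available. This is precisely why the paper replaces $\tfrac12\|\nabla\varepsilon\|_{L^2}^2$ by the modified energy $E(t)=\tfrac12\int|\nabla\varepsilon|^2-\tfrac{\IL}{\alpha+2}\int|\varepsilon|^{\alpha+2}$, obtained by multiplying the equation by $-\Delta\bar\varepsilon-\IL\overline{f(\varepsilon)}$: the extra piece of the multiplier cancels exactly the $\IL$-contribution of $\nabla f(\varepsilon)\cdot\nabla\bar\varepsilon$, so the pure-$\varepsilon$ term enters $\tfrac{d}{dt}E$ with coefficient $\Re\lambda=1$ and, by~\eqref{fDerf2:b1}, with the favorable sign $+\int|\varepsilon|^\alpha|\nabla\varepsilon|^2$. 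That retained dissipation is then what absorbs the remaining fractions of $\int|\varepsilon|^\alpha|\nabla\varepsilon|^2$ arising from the mixed terms (and from~\eqref{fDerf7} applied to $A_3$), and what, after integration in time with the weight $(-t)^{-\sigma}$, produces the space-time bound~\eqref{feApprox1:9}; in your scheme there is no mechanism to obtain~\eqref{feApprox1:9} either, since the quantities $\int|\varepsilon_n|^\alpha|\nabla\varepsilon_n|^2$ appear only as errors to be dominated, not as a coercive term. Without this cancellation device (or an equivalent one), the argument closes only under a smallness condition relating $\IL$ to $\Re\lambda$, which is exactly the restriction the Proposition is designed to avoid.
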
 

\begin{proof} 
Throughout the proof, we write $\varepsilon $ instead of $\varepsilon _n$. 
Moreover, $C$ denotes a constant that may change from line to line, but that is independent of $n$ and $t$.
Unless otherwise specified, all integrals are over $\R^N $.
Using~\eqref{fAP1} and~\eqref{fRA6}, we have
\begin{equation} \label{fAP2} 
\begin{cases} 
\partial _t \varepsilon = i\Delta \varepsilon  + \lambda ( f (U_J+ \varepsilon ) - f(U_J) ) + \Err_J \\
\varepsilon (T_n)= 0 .
\end{cases} 
\end{equation}  
We control $\varepsilon $ by energy estimates.
Let
\begin{equation} \label{fAP3} 
\begin{split} 
\tau _n & = \inf \{ t\in [ \max\{ T, s_n \},T_n];\,  \| \varepsilon  (s) \| _{ L^2  } \le (-s)^\sigma  \text{ and }  \\
&  \| \nabla \varepsilon  (s) \| _{ L^2  } \le (-s)^{(1-\theta )\sigma }  \text{ for }t < s\le T_n ,
 \text{ and }  \int  _t ^{T_n} \int  _{ \R^N  }  |\varepsilon |^\alpha  |\nabla \varepsilon |^2 \le 1 \} .
\end{split} 
\end{equation} 
Since $\varepsilon (T_n)=0$, we see that $T\le \tau _n < T_n$. 
Moreover, it follows from the blowup alternative~\eqref{fBUA1} that
\begin{equation}  \label{fAP3:b1} 
s_n < \tau _n .
\end{equation} 
In addition, by Gagliardo-Nirenberg's inequality, \eqref{fAP3}  and~\eqref{feApprox1:5}, 
\begin{equation} \label{fAP4} 
 \| \varepsilon (t) \| _{ L^{\alpha +2} }^{ \alpha +2} \le C (-t)^{ ( \alpha +2 - \frac {N\alpha } {2} \theta )\sigma }
 = C (-t)^{ 2\sigma + \alpha ( 1 - \frac {N\theta  } {2} )\sigma } \le C (-t)^{ 2\sigma },  
\end{equation} 
for $\tau _n \le t\le T_n$. 
Moreover, it follows from~\eqref{feEstUJ:4}, \eqref{fEUZ2}, \eqref{feEstUJ:3} and~\eqref{feEstUz:1:0}  that
\begin{equation} \label{fEUGU1} 
 | U_J| \le 2  |U_0| \le 2 \alpha ^{-\frac {1} {\alpha }} (-t)^{-\frac {1} {\alpha }}, \quad  |\nabla U_J| \le C (-t)^{- \frac {1} {k}}  |U_0|
 \le C (-t)^{-\frac {1} {\alpha } - \frac {1} {k}}
\end{equation} 
for all $T<t<0$.

We first estimate $ \|\varepsilon \| _{ L^2 }$. Multiplying~\eqref{fAP2} by $ \overline{\varepsilon } $ and taking the real part, we obtain
\begin{equation*} 
\begin{split} 
\frac {1} {2} \frac {d} {dt}  \|\varepsilon (t)\| _{ L^2 }^2  = &  \Re  \Bigl(  \lambda   \int    [ df(U_J ) \varepsilon  ]\overline{\varepsilon }  \Bigr) \\ &+  \Re   \Bigl( \lambda \int   [ f (U_J+ \varepsilon ) - f(U_J) - df(U_J ) \varepsilon  ]  \overline{\varepsilon }  \Bigr) + \Re \int   \Err _J  \overline{\varepsilon}   .
\end{split} 
\end{equation*} 
Using~\eqref{feANL1:1} and~\eqref{fDerf5}, we deduce that
\begin{equation*} 
\begin{split} 
 \Bigl| \frac {d} {dt}  \|\varepsilon (t)\| _{ L^2 }^2 \Bigr| \le & 2  |\lambda | \Csu \int  |U_J |^\alpha  |\varepsilon |^2 + 2  |\lambda | \Csu \int  [ |\varepsilon |^{\alpha +2} + \Calpha  |U_J|^{\alpha -1}  |\varepsilon |^3 ] \\ & +  \| \Err _J\| _{ L^2 }  \|\varepsilon \| _{ L^2 } .
\end{split} 
\end{equation*} 
By~\eqref{fEUGU1},
\begin{equation*} 
2  |\lambda | \Csu \int  |U_J |^\alpha  |\varepsilon |^2 \le 2^{\alpha +1}\alpha ^{-1} |\lambda | \Csu  (-t)^{-1}  \| \varepsilon \| _{ L^2 }^2 .
\end{equation*}  
The term $\int  |\varepsilon |^{ \alpha +2} $ is estimated by~\eqref{fAP4}. 
Note that $\Calpha  \not = 0$ only if $\alpha >1$. In this case $\alpha -1>0$ and $2 <3<\alpha +2$, so we deduce from~\eqref{fEUGU1}, H\"older's inequality, \eqref{fAP3}, \eqref{fAP4} and~\eqref{feApprox1:5} that
\begin{equation*} 
\Calpha  \int  |U_J |^{\alpha -1}  |\varepsilon |^3 \le C \Calpha  (-t) ^{-1 + \frac {1} {\alpha }} \int  |\varepsilon |^3
 \le C  (-t) ^{-1 + \frac {1} {\alpha } + 2 \sigma  } .
\end{equation*} 
Next, by~\eqref{feEstUJ:2}, \eqref{feEstUz:2}  and~\eqref{fAP3},
\begin{equation*} 
\| \Err _J\| _{ L^2 }  \|\varepsilon \| _{ L^2 } \le C (-t) ^{ J (1- \frac {2} {k}) - \frac {2 } {k}  -\frac {1} {\alpha } + \sigma }=
C (-t) ^{- 1 + (J +1 ) (1 - \frac {2} {k})  -\frac {1} {\alpha } + \sigma } .
\end{equation*}  
Note that by~\eqref{feApprox1:6} and~\eqref{feApprox1:7}, 
\begin{equation*} 
(J +1 ) (1 - \frac {2} {k})  -\frac {1} {\alpha } + \sigma \ge \frac {1} {2}(J +1 ) -\frac {1} {\alpha } + \sigma \ge 3 \sigma ,
\end{equation*} 
so that
\begin{equation} \label{fCite2} 
\| \Err _J\| _{ L^2 }  \|\varepsilon \| _{ L^2 } \le  C (-t) ^{- 1 + 3 \sigma } .
\end{equation}  
It follows from the above inequalities that
\begin{equation*} 
\frac {d} {dt}  \|\varepsilon (t)\| _{ L^2 }^2 \ge  - 2^{\alpha +1}\alpha ^{-1} |\lambda | \Csu  (-t)^{-1}  \| \varepsilon \| _{ L^2 }^2  - C (-t)^{ -1 + 2\sigma + \nu } ,
\end{equation*} 
where $\nu =  \min\{ 1, \frac {1} {\alpha }, \sigma  \}$; 
and so
\begin{equation} \label{fCite1} 
\begin{split} 
\frac {d} {dt}  \Bigl( (-t)^{- \sigma }  & \|\varepsilon (t)\| _{ L^2 }^2 \Bigr) =  \sigma  (-t)^{-\sigma -1} \|\varepsilon (t)\| _{ L^2 }^2 +  (-t)^{-\sigma } \frac {d} {dt}  \|\varepsilon (t)\| _{ L^2 }^2 \\ \ge  & [\sigma - 2^{\alpha +1}\alpha ^{-1} |\lambda | \Csu ] (-t)^{-\sigma -1} \|\varepsilon (t)\| _{ L^2 }^2 - C (-t)^{ -1 +  \sigma  + \nu}  .
\end{split} 
\end{equation} 
Using~\eqref{feApprox1:3}, we obtain 
\begin{equation*} 
\frac {d} {dt}  \Bigl( (-t)^{-\sigma }   \|\varepsilon (t)\| _{ L^2 }^2 \Bigr)   \ge   - C (-t)^{ -1 +  \sigma + \nu }  .
\end{equation*} 
Integrating on $(t, T_n)$ and using $\varepsilon (T_n) =0$, we deduce that 
\begin{equation*} 
 (-t)^{-\sigma }   \|\varepsilon (t)\| _{ L^2 }^2  \le  C (-t)^{ \sigma + \nu } ,
\end{equation*}
hence
\begin{equation} \label{fAP5} 
\|\varepsilon (t)\| _{ L^2 } \le C (-t)^{\sigma + \frac {\nu } {2}} , 
\end{equation} 
for all $t\in (\tau _n, T_n )$. 

We now define the energy 
\begin{equation} \label{fAPE3} 
E(t) = \frac {1} {2} \int    |\nabla \varepsilon (t)| ^2 - \frac {\IL } {\alpha +2} \int    | \varepsilon (t)|^{\alpha +2} .
\end{equation} 
Multiplying equation~\eqref{fAP2} by $-\Delta  \overline{\varepsilon } - \IL  \overline{f(\varepsilon )} $ and taking the real part, we obtain after integrating by parts
\begin{equation} \label{fAPE4} 
\begin{split} 
\frac {d} {dt} E(t) = &  \Re \int   \nabla  f(\varepsilon )  \cdot \nabla  \overline{\varepsilon }  + \Re \int   \lambda \nabla ( f (U_J+ \varepsilon ) - f(U_J) - f(\varepsilon ) ) \cdot \nabla  \overline{\varepsilon }   \\
& - \IL  \Re \int    \lambda ( f (U_J+ \varepsilon ) - f(U_J) )  \overline{f(\varepsilon )} + \Re \int   \nabla \Err_J \cdot \nabla  \overline{\varepsilon } \\ & - \IL  \Re \int    \Err_J \overline{f(\varepsilon )}
= : A_1 + A_2 + A_3 + A_4 + A_5 .
\end{split} 
\end{equation} 
Using~\eqref{fDerf2:b1}, we have
\begin{equation} \label{fAPE5} 
A_1 \ge \int  |\varepsilon |^\alpha  |\nabla \varepsilon |^2. 
\end{equation} 
Moreover, it follows from~\eqref{feANL1:2}, \eqref{fEUGU1},   \eqref{fAP4}, \eqref{fDerf7}  that
\begin{equation*} 
\begin{split} 
 |A_3| & \le  |\lambda |^2 \Csu \int (  |U_J|^\alpha  |\varepsilon |^{\alpha +2} +  |\varepsilon |^{2\alpha +2} )  \\ & 
 \le  C  (-t)^{ - 1+ 2\sigma  } +  \Bigl(   \frac {1} {8}+ C (-t)^{ \frac {4\sigma } {N} } \Bigr) \int  |\varepsilon |^\alpha   | \nabla \varepsilon  |^2.
\end{split} 
\end{equation*} 
We let $ \widetilde{s} <0$ be defined by
\begin{equation} 
 C (-  \widetilde{s}  )^{ \frac {4\sigma } {N} } = \frac {1} {8},
\end{equation} 
and we deduce that
\begin{equation} \label{fAPE6} 
 |A_3| \le  C  (-t)^{ - 1+ 2\sigma  } +    \frac {1} {4}  \int  |\varepsilon |^\alpha   | \nabla \varepsilon  |^2,
\end{equation} 
for all $n\ge - \frac {1} { \widetilde{s} }$ and all $\tau _n < t\le  T_n$ such that $t\ge  \widetilde{s} $. 

Next by~\eqref{feEstUJ:2}, \eqref{feEstUz:2} and~\eqref{fAP3}, 
\begin{equation*} 
\begin{split} 
 |A_4| & \le  \| \nabla \Err_J \| _{ L^2 }  \| \nabla \varepsilon \| _{ L^2 } \le C (-t)^{J( 1 - \frac {2} {k} ) - \frac {3} {k}  } \|U_0\| _{ L^2 }  \|\nabla \varepsilon \| _{ L^2 } \\ &
\le   C (-t)^{J( 1 - \frac {2} {k} ) - \frac {3} {k} -\frac {1} {\alpha } + (1-\theta )\sigma }  .
\end{split} 
\end{equation*} 
Moreover, using~\eqref{feEstUJ:2}, \eqref{feEstUz:2} and~\eqref{fAP4}, 
\begin{equation*} 
 |A_5| \le C \| \Err_J \| _{ L^{\alpha +2} }  \| \varepsilon  \| _{ L^{\alpha +2} }^{\alpha +1}
 \le  C (-t)^{J( 1 - \frac {2} {k} ) - \frac {2} {k} - \frac {1} {\alpha } + \frac {\alpha +1} {\alpha +2} ( \alpha +2 - \frac {N\alpha } {2} \theta )\sigma}   .
\end{equation*} 
Using $\min \{ 1-\theta, \frac {\alpha +1} {\alpha +2} ( \alpha +2 - \frac {N\alpha } {2} \theta ) \}\ge 1 - (N\alpha +1)\theta \ge 0$ (by~\eqref{feApprox1:5}), we conclude that
\begin{equation*} 
 |A_4 + A_5| \le  C (-t)^{J( 1 - \frac {2} {k} ) - \frac {3} {k} - \frac {1} {\alpha }  } 
 =  C (-t)^{- 1 +( J +1) (1 - \frac {3} {k} ) - \frac {1} {\alpha } + \frac {1} {k} J }   .
\end{equation*} 
Note that by~\eqref{feApprox1:6} and~\eqref{feApprox1:7} 
\begin{equation*} 
( J +1)  \Bigl( 1 - \frac {3} {k}  \Bigr) - \frac {1} {\alpha } + \frac {1} {k} J \ge ( J +1)  \Bigl( 1 - \frac {3} {k}  \Bigr) - \frac {1} {\alpha } \ge \frac {1} {2}( J +1) - \frac {1} {\alpha } \ge 2\sigma ,
\end{equation*} 
so that
\begin{equation} \label{rfAPE7} 
 |A_4 + A_5| \le  C (-t)^{- 1 + 2\sigma   }   .
\end{equation} 
We now estimate $A_2$. We write
\begin{equation*} 
\begin{split} 
\nabla  ( f (U_J+ \varepsilon ) - f(U_J) - f(\varepsilon ) ) 
=  & (df(U_J + \varepsilon  ) - df (\varepsilon )) \nabla \varepsilon   \\ & + 
 (df(U_J + \varepsilon ) - df (U_J )) \nabla U_J  ,
\end{split} 
\end{equation*} 
so that 
\begin{equation} \label{rfAPE8} 
 |A_2| \le  |\lambda |  \Bigl( \int B_1 + \int B_2  \Bigr) 
\end{equation} 
with 
\begin{equation*} 
B_1 =   |df(U_J + \varepsilon ) - df (\varepsilon )| \,  |\nabla \varepsilon |^2, \quad B_2 =   |df(U_J + \varepsilon ) - df (U_J )| \,  |\nabla U_J| \,  |\nabla \varepsilon | . 
\end{equation*} 
It follows from~\eqref{fDerf4:1:b1}  and~\eqref{feEstUJ:4}  that 
\begin{equation*} 
  |df(U_J + \varepsilon ) - df (\varepsilon  )|  \le  2^\alpha \Csu  |U_0|^\alpha + 2\Csu \Calpha  |\varepsilon |^{\alpha -1}  |U_0| .
\end{equation*} 
If $\alpha >1$, then $  | \varepsilon |^{\alpha -1}  |U_0| \le (8 \Csu  |\lambda |)^{-1}  |\varepsilon |^\alpha +(8 \Csu  |\lambda |)^{\alpha -1}  |U_0 |^\alpha $, so that (recall $ |\lambda |\ge 1$)
\begin{equation*} 
\begin{split} 
  |df(U_J + \varepsilon ) - df (\varepsilon  )|  & \le ( 2^\alpha \Csu   + 2\Csu (8 \Csu  |\lambda |)^{\alpha -1} ) |U_0 |^\alpha + \frac {1} {4  |\lambda |}  |\varepsilon |^\alpha  \\ & \le (8 \Csu  |\lambda |)^{\alpha }  |U_0 |^\alpha + \frac {1} {4  |\lambda |}  |\varepsilon |^\alpha  .
\end{split} 
\end{equation*} 
Using~\eqref{fEUZ2}, we deduce that
\begin{equation} \label{rfAPE11} 
\int B_1 \le \alpha (8 \Csu  |\lambda |)^{\alpha } (-t)^{-1}  \| \nabla \varepsilon \| _{ L^2 }^2 + \frac {1} {4  |\lambda |} \int  |\varepsilon |^\alpha  |\nabla \varepsilon |^2 .
\end{equation} 
To estimate $B_2$, we consider separately the cases $\alpha \le 1$, $1< \alpha \le 3$, and $\alpha >3$.

Suppose first $\alpha \le 1$. 
Using~\eqref{fEUGU1}, we see that
\begin{equation} \label{fEUGU2} 
 |U_J|^{\alpha -1}  |\nabla U_J| \le C (-t)^{- \frac {1} {k}}  |U_0|^\alpha \le  (-t)^{-1 - \frac {1} {k}} .
\end{equation} 
Now if $ |v |\le \frac {1} {2}  |u|$, then $ |u+sv| \ge \frac {1} {2}  |u|$ for all $0\le s\le 1$. Writing
 $df(u+v) - df(u) = \int _0^1 \frac {d} {ds} df(u+ sv)$, it follows easily that
\begin{equation*} 
 |df(u+v) - df(u) | \le C  \Bigl( \min _{ 0\le s\le 1 } | u + sv | \Bigr)^{\alpha -1}  |v| \le C  |u|^{\alpha -1} |v|,\quad  \text{if }  |v |\le \frac {1} {2}  |u| .
\end{equation*} 
It follows that
\begin{equation*} 
\begin{split} 
\int  _{  |\varepsilon  |\le  \frac {1} {2}  | U_J | } B_2 & \le C \int  |U_J| ^{\alpha -1}  |\varepsilon | \,  |\nabla U_J| \,  |\nabla \varepsilon |
\le C (-t)^{-1 - \frac {1} {k} }  \|\varepsilon \| _{ L^2 }  \| \nabla \varepsilon \| _{ L^2 } \\ & 
\le C (-t)^{-1 - \frac {1} {k} + (2-\theta )\sigma } 
\end{split} 
\end{equation*} 
where we used~\eqref{fEUGU2} and~\eqref{fAP3}.
Moreover, using~\eqref{feANL1:3} and~\eqref{fEUGU2},
\begin{equation*} 
\begin{split} 
  \int  _{  |\varepsilon  |> \frac {1} {2}  | U_J | } B_2 \le  & C   \int  _{  |\varepsilon  |> \frac {1} {2}  | U_J | }  |\varepsilon |^\alpha   |\nabla U_J| \,  |\nabla \varepsilon |   
\le  C   \int  _{  |\varepsilon  |> \frac {1} {2}  | U_J | }  |\varepsilon |^{\alpha  -1} |\nabla U_J| \, |\varepsilon |\,  |\nabla \varepsilon |   \\ \le & C   \int  |U_J |^{\alpha  -1} |\nabla U_J| \, |\varepsilon |\,  |\nabla \varepsilon |  \le  C (-t)^{-1 - \frac {1} {k} + (2-\theta )\sigma } .
\end{split} 
\end{equation*} 
Thus we see that
\begin{equation} \label{rfAPE12} 
\int   B_2  \le C (-t)^{-1 - \frac {1} {k} + (2-\theta )\sigma } .
\end{equation} 

When $\alpha >1$, we deduce from~\eqref{fDerf4:1:b1}, \eqref{fEUGU2} and~\eqref{fAP3} that 
\begin{equation*} 
\begin{split} 
\int B_2 & \le  \Csu \int   |U_J|^{\alpha -1} |\nabla U_J| \, |\varepsilon | \, |\nabla \varepsilon | + \Csu \int  |\nabla U_J| \, |\varepsilon |^\alpha  |\nabla \varepsilon |  \\ & \le C (-t)^{-1 - \frac {1} {k} + (2-\theta )\sigma } +  \Csu \int  |\nabla U_J| \, |\varepsilon |^\alpha  |\nabla \varepsilon | .
\end{split} 
\end{equation*} 

Suppose $1< \alpha \le 3$. By~\eqref{fDFG0} we have $0\le  N- (N-2)\alpha < 2\alpha $, and by Gagliardo-Nirenberg's inequality
\begin{equation*} 
 \| \varepsilon  \| _{ L^{2\alpha } } \le C  \| \nabla \varepsilon \| _{ L^2 }^{\frac {N(\alpha -1)} {2\alpha }}  \|\varepsilon \| _{ L^2 }^{\frac {N- (N-2) \alpha } {2\alpha }} .
\end{equation*} 
Using~\eqref{fEUGU1} and~\eqref{fAP3},
\begin{equation*} 
\begin{split} 
\int  |\nabla U_J| \, & |\varepsilon |^\alpha  |\nabla \varepsilon |  \le C  (-t)^{-\frac {1} {\alpha } - \frac {1} {k}}  \| \varepsilon \| _{ L^{2\alpha } }^\alpha  \| \nabla \varepsilon \| _{ L^2 } \le C  (-t)^{-\frac {1} {\alpha } - \frac {1} {k} + (\alpha +1 - \frac {N\alpha +2-N} {2} \theta )\sigma }\\ & \le C  (-t)^{- 1 - \frac {1} {k} + (\alpha +1 - \frac {N\alpha +2-N} {2} \theta )\sigma }
 = C  (-t)^{- 1 - \frac {1} {k}+ (2- \theta ) \sigma  + (\alpha -1) (1 - \frac {N} {2} \theta )\sigma } .
\end{split} 
\end{equation*} 
Using~\eqref{feApprox1:3}, we conclude that in this case 
\begin{equation} \label{rfAPE13} 
\int B_2 \le C (-t)^{-1 - \frac {1} {k} + (2-\theta )\sigma }  .
\end{equation} 

If $\alpha >3$, then by~\eqref{fEUGU1},
\begin{equation*} 
\begin{split} 
\Csu  |\nabla U_J| \, |\varepsilon |^\alpha  |\nabla \varepsilon | & \le \frac {1} {4  |\lambda |}  |\varepsilon |^\alpha  |\nabla \varepsilon |^2 + C |\varepsilon |^\alpha  |\nabla U_J |^2 \\ &
\le \frac {1} {4  |\lambda |}  |\varepsilon |^\alpha  |\nabla \varepsilon |^2 + C (-t)^{- \frac {2} {k}}  |\varepsilon |^\alpha  |U_0 |^2.
\end{split} 
\end{equation*} 
Applying~\eqref{fAP4} and~\eqref{feEstUz:2}, we obtain
\begin{equation*} 
\begin{split} 
\int  |\varepsilon |^\alpha  |U_0 |^2 & \le  \| \varepsilon \| _{ L^{\alpha +2} }^\alpha   \|U_0\| _{ L^{\alpha +2} }^2 
\le C (-t)^{ - \frac {2} {\alpha } + ( \alpha  - \frac {N\alpha ^2} {2 (\alpha +2)} \theta )\sigma } \\
& \le C (-t)^{ - 1 + ( \alpha  - \frac {N\alpha ^2} {2 (\alpha +2)} \theta )\sigma } .
\end{split} 
\end{equation*} 
Since $\alpha \ge 3$ and $N\le 3$,
\begin{equation*} 
\begin{split} 
 ( \alpha  - \frac {N\alpha ^2} {2 (\alpha +2)} \theta )\sigma & = (2- \theta )\sigma +   \Bigl( \alpha -2 - \frac {N\alpha ^2 } {2 (\alpha +2)} \theta + \theta  \Bigr) \sigma \\
& \ge  (2- \theta )\sigma +   ( 1 -  3\alpha ^2  \theta  ) \sigma 
\end{split} 
\end{equation*} 
Using~\eqref{feApprox1:5}, we deduce that
\begin{equation*} 
\int  |\varepsilon |^\alpha  |U_0 |^2 \le C (-t)^{ - 1 +  (2- \theta )\sigma  } ;
\end{equation*} 
and so,
\begin{equation*} 
  \Csu \int  |\nabla U_J| \, |\varepsilon |^\alpha  |\nabla \varepsilon | \le \frac {1} {4  |\lambda |} \int  |\varepsilon |^\alpha  |\nabla \varepsilon |^2 + C  (-t)^{ - 1 - \frac {2} {k} + (2- \theta )\sigma   } ,
\end{equation*} 
so that in this case
\begin{equation} \label{rfAPE14} 
\int B_2 \le  \frac {1} {4  |\lambda |} \int  |\varepsilon |^\alpha  |\nabla \varepsilon |^2 + C (-t)^{-1 - \frac {2} {k} + (2-\theta )\sigma }.
\end{equation} 
Estimates~\eqref{rfAPE8}, \eqref{rfAPE11}, \eqref{rfAPE12}, \eqref{rfAPE13}  and~\eqref{rfAPE14}  imply
\begin{equation*} 
 | A_2 | \le  \alpha  |\lambda | (8 \Csu  |\lambda |)^{\alpha } (-t) ^{-1} \| \nabla \varepsilon \| _{ L^2 }^2 +  \frac {1} {2}  \int  |\varepsilon |^\alpha  |\nabla \varepsilon |^2 + C (-t)^{-1 - \frac {2} {k} + (2-\theta )\sigma } .
\end{equation*} 
Using~\eqref{feApprox1:7}, we see that $- \frac {2} {k} + (2-\theta )\sigma \ge 2(1-\theta ) + \frac {\theta \sigma } {2}$, hence
\begin{equation}  \label{rfAPE15} 
 | A_2 | \le  \alpha  |\lambda | (8 \Csu  |\lambda |)^{\alpha } (-t) ^{-1} \| \nabla \varepsilon \| _{ L^2 }^2 +  \frac {1} {2}  \int  |\varepsilon |^\alpha  |\nabla \varepsilon |^2 + C (-t)^{-1 + 2(1-\theta )\sigma + \frac {\theta \sigma } {2} } .
\end{equation} 
Combining~\eqref{fAPE4}, \eqref{fAPE5}, \eqref{fAPE6}, \eqref{rfAPE7}  and~\eqref{rfAPE15}, we obtain
\begin{equation*} 
\frac {d} {dt} E(t) \ge  \frac {1} {4} \int  |\varepsilon |^\alpha  |\nabla \varepsilon |^2 -  \alpha  |\lambda | (8 \Csu  |\lambda |)^{\alpha } (-t)^{-1}  \| \nabla \varepsilon \| _{ L^2 }^2 - C (-t)^{-1 + 2(1-\theta )\sigma + \frac {\theta \sigma } {2} }  .
\end{equation*} 
Using~\eqref{fAP4} and~\eqref{feApprox1:3}, we deduce that 
\begin{equation}  \label{rfAPE17} 
\begin{split} 
\frac {d} {dt}[ (-t)^{- \sigma }  E(t) ] = & \sigma  (-t)^{- \sigma -1} E(t) + (-t)^{- \sigma } \frac {d} {dt} E(t )
\\ \ge &  \frac {1} {4} (-t)^{-\sigma  } \int  |\varepsilon |^\alpha  |\nabla \varepsilon |^2
 - C (-t)^{   -1 + (1- 2\theta )\sigma + \frac {\theta \sigma } {2} } .
\end{split} 
\end{equation} 
It follows from~\eqref{feApprox1:5}  that $(1-2 \theta )\sigma \ge 0$, so that  the power of $-t$  on  the right-hand side of the above inequality are (strictly) larger than $-1$.
 Integrating on $(t, T_n)$, using $\varepsilon (T_n)=0$, and multiplying by $(-t)^\sigma $, we obtain
\begin{equation*} 
\frac {1} {4} (-t)^\sigma  \int _t^{T_n }  (-s)^{-\sigma } \int  |\varepsilon |^\alpha  |\nabla \varepsilon |^2 +  E(t) \le  
C   (-t)^{  2(1-  \theta )\sigma + \frac {\theta \sigma } {2} }  .
\end{equation*} 
Using~\eqref{fAP4}, we deduce that 
\begin{equation} \label{fAP5:b1} 
 (-t)^\sigma  \int _t^{T_n }  \int  |\varepsilon |^\alpha  |\nabla \varepsilon |^2 +    \| \nabla \varepsilon \| _{ L^2 }^2  \le  
C (-t)^{ 2(1-\theta )\sigma + \frac {\theta \sigma } {2}  } 
\end{equation} 
for all $n\ge - \frac {1} { \widetilde{s} }$ and all $\tau _n < t\le  T_n$ such that $t\ge  \widetilde{s} $. 

We now conclude as follows. By~\eqref{fAP5} and~\eqref{fAP5:b1} (and since $2(1-\theta ) \sigma \ge  \sigma $), there exists $S<0$ such that for $n$ sufficiently large (so that $S< T_n$),
\begin{equation} \label{fAP5:b2} 
 \| \varepsilon (t)  \| _{ L^2 } \le   (-t)^\sigma , \quad   \| \nabla \varepsilon \| _{ L^2 }^2  \le   (-t)^{2 (1- \theta ) \sigma }, \quad \int  _t ^{T_n} \int  |\varepsilon |^\alpha  | \nabla \varepsilon |^2 \le 1 ,
\end{equation} 
for all $\tau _n <t < T_n$ such that $t\ge S$. 
By the definition~\eqref{fAP3}  of $\tau _n$, this implies $  \tau _n \le S$. 
 Using property~\eqref{fAP3:b1}, we conclude that $s_n < S$ and that~\eqref{feApprox1:1}, \eqref{feApprox1:2} and~\eqref{feApprox1:9} hold.  
\end{proof} 

\section{Proof of Theorem~$\ref{eThm1}$} \label{sProof} 

Let $ \Cset $ be any compact set of $\R^N$ included in the ball of center $0$ and radius~$1$
(by the scaling invariance of equation \eqref{NLS1}, this assumption does not restrict the generality).
It is well-known that there exists a smooth function $Z:\R^N\to [0,\infty)$ which vanishes exactly on $ \Cset $
(see \emph{e.g.}~Lemma 1.4, page 20 of \cite{MoRe}).
For $\alpha $ satisfying \eqref{fDFG0}, let $\sigma ,\theta ,J,k$ be defined by~\eqref{feApprox1:3}, \eqref{feApprox1:5}, \eqref{feApprox1:6} and~\eqref{feApprox1:7}.  
 Define the function $A:\R^N\to [0,\infty)$ by
\begin{equation} \label{fDfnA} 
A(x)=\left(Z(x) \chi(  |x|) + (1-\chi(  |x| )) |x|\right)^k,
\end{equation} 
where 
\begin{equation*} 
\begin{cases} 
 \chi \in C^\infty (\R, \R) \\
 \chi (s) = 
\begin{cases} 
1 & 0\le s\le 1 \\
0 & s\ge 2
\end{cases} 
\\
  \chi '(s) \le 0 \le \chi (s) \le 1,\quad s\ge 0.
\end{cases} 
\end{equation*} 
It follows that the function $A$ satisfies \eqref{on:A} and vanishes exactly on $ \Cset $.

We consider the solution $u_n$ of equation~\eqref{fAP1}, $\varepsilon _n$ defined by~\eqref{fDfnen}, and $n_0\ge 1$ and $S <0$ given by Proposition~\ref{eApprox1}. 
Using the estimate~\eqref{feANL1:2}  and the 
 embeddings $H^1 (\R^N ) \hookrightarrow L^{\alpha +2} (\R^N ) $, $L^{\frac {\alpha +2} {\alpha +1}} ( \R^N ) \hookrightarrow H^{-1} ( \R^N ) $, we deduce from equation~\eqref{fAP2} that
\begin{equation*} 
 \| \partial _t \varepsilon _n \| _{ H^{-1} } \lesssim \| \varepsilon _n  \| _{ H^1 } + \| U_J \| _{ H^1 }^\alpha \| \varepsilon _n \| _{ H^1 } + \| \varepsilon _n \| _{ H^1 }^{\alpha +1} + \|  \Err _J \| _{ L^2 }
\end{equation*} 
so that, applying~\eqref{feApprox1:1}, \eqref{feApprox1:2}, \eqref{feEstUJ:2}, \eqref{feEstUJ:3},  \eqref{feEstUz:1:0} and~\eqref{feEstUz:2}, there exists $\kappa >0$ such that
\begin{equation} \label{fEEN19} 
 \| \partial _t \varepsilon _n \| _{ H^{-1} } \le C(- t) ^{ - \kappa } ,\quad S\le t\le T_n  .
\end{equation} 
Given $\tau \in (S, 0 )$, it follows from~\eqref{feApprox1:1}, \eqref{feApprox1:2} and~\eqref{fEEN19} that $\varepsilon _n$ is bounded in $L^\infty ((S , \tau  ), H^1 (\R^N ) ) \cap W^{1, \infty } ((S ,\tau  ), H^{-1} (\R^N ) )$. 
Therefore, after possibly extracting a subsequence, there exists $\varepsilon  \in L^\infty ((S ,\tau  ), H^1 (\R^N ) ) \cap W^{1, \infty } ((S ,\tau  ), H^{-1} (\R^N ) )$ such that
\begin{gather} 
\varepsilon _n \goto _{ n\to \infty } \varepsilon  \text{ in } L^\infty ((S, \tau  ), H^1 (\R^N ) ) \text{ weak$^\star$} 
\label{fEEN21} \\
\partial _t \varepsilon _n  \goto _{ n\to \infty } \partial _t \varepsilon  \text{ in } L^\infty ((S, \tau  ), H^{-1} (\R^N ) ) \text{ weak$^\star$} \label{fEEN22} \\
\varepsilon _n (t) \goto _{ n\to \infty } \varepsilon  (t) \text{ weakly in $H^1 (\R^N ) $, for all $S\le t\le \tau  $} \label{fEEN23} 
\end{gather} 
Since $\tau \in (S, 0 )$ is arbitrary, a standard argument of diagonal extraction shows that there exists $\varepsilon  \in L^\infty _\Loc ((s, 0 ), H^1 (\R^N ) ) \cap W^{1, \infty } _\Loc ((S, 0 ), H^{-1} (\R^N ) )$ such that (after extraction of a subsequence) \eqref{fEEN21}, \eqref{fEEN22} and~\eqref{fEEN23} hold for all $S < \tau <0 $. 
Moreover, \eqref{feApprox1:1}, \eqref{feApprox1:2} and~\eqref{fEEN23} imply that
\begin{equation} \label{fEEN24} 
 \| \varepsilon  (t) \| _{ L^2 } \le (- t)^\sigma , \quad  \| \nabla  \varepsilon  (t) \| _{ L^2 } \le  (-t) ^{ (1-\theta ) \sigma },
\end{equation} 
for $S\le t<0$, and \eqref{fEEN19} and~\eqref{fEEN22} imply that
\begin{equation} \label{fEEN24:b1} 
 \| \partial _t \varepsilon  \| _{ L^\infty ((S, \tau  ), H^{-1} )} \le C (-\tau )^{ -\kappa } 
\end{equation} 
for all $S< \tau <0 $. 
In addition, it follows easily from~\eqref{fAP2} and the convergence properties~\eqref{fEEN21}--\eqref{fEEN23} that 
\begin{equation} \label{NLS6} 
\partial _t \varepsilon = i\Delta \varepsilon  + \lambda ( f (U_J+ \varepsilon ) - f(U_J) ) + \Err_J 
\end{equation} 
in $L^\infty _\Loc ((S, 0 ), H^{-1} (\R^N ) )$. 
Therefore, setting 
\begin{equation} \label{fEEN25} 
 u (t)= U_J (t) + \varepsilon  (t) \quad  S\le t<0
\end{equation} 
we see that $u \in L^\infty _\Loc (( S,0 ), H^1 (\R^N ) ) \cap W^{1, \infty } _\Loc (( S, 0 ), H^{-1} (\R^N ) )$ and, using~\eqref{fRA6}, that 
\begin{equation} \label{fEEN26:b1} 
\partial _t u= i\Delta u + \lambda f(u)
\end{equation} 
in $L^\infty _\Loc ((S , 0 ), H^{-1} (\R^N ) )$. 
By local existence in $H^1 (\R^N ) $ and uniqueness in $L^\infty _t H^1 _x$, we conclude that 
$ u\in C( (S , 0), H^1 (\R^N ) ) \cap C^1 ( ( S , 0), H^{-1} (\R^N ) ) $.

We now prove properties~\eqref{eThm1:1}, \eqref{eThm1:2} and~\eqref{eThm1:3}. 
Let $\Omega $ be an open subset of $\R^N $ such that $ \overline{\Omega } \cap \Cset = \emptyset$.
It follows from~\eqref{fDfnA}  that $A>0$ on $\Omega $; and so there exists a constant $c>0$ such that $A(x) \ge c (1 + |x| ) ^k$ on $\Omega $. 
Using~\eqref{fEUZ2}, we deduce that $ | U_0| \le C (1+  |x| )^{- \frac {k} {\alpha }}$ on $\Omega $. 
Since $ (1+  |x| )^{- \frac {k} {\alpha }} \in  L^2 (\R^N ) $ by~\eqref{feApprox1:7}, we conclude, applying~\eqref{feEstUJ:3} and~\eqref{feEstUz:1:0}, that 
\begin{equation} \label{fUpEsUj} 
\limsup _{ t \uparrow 0 }  \| U_J (t) \| _{ H^1 (\Omega ) } <\infty . 
\end{equation}   
Property~\eqref{eThm1:3} follows, using~\eqref{fEEN24}.
Let now $x_0\in \Cset$ and $r>0$, and set $\omega =  \{  |x-x_0| <r \}$. Let $p\ge 2$ satisfy $(N-2) p \le 2N$, so that $H^1 ( \omega ) \hookrightarrow L^p (\omega  )$. 
It follows from~\eqref{feEstUJ:4}, \eqref{feEstUz:2}  and~\eqref{e:24}  that 
\begin{equation*} 
 (- t)^{-\frac 1{\alpha }+\frac N{pk}} \lesssim  \|U_J (t)\|_{L^p (\omega )} \lesssim  \|U_J (t)\|_{L^p (\R^N  )} \lesssim  (- t)^{-\frac 1{\alpha }} .
\end{equation*} 
Using~\eqref{fEEN24} and the embedding $H^1 ( \omega ) \hookrightarrow L^p (\omega  )$ we deduce that 
\begin{equation} \label{fEstLp:2} 
(- t)^{-\frac 1{\alpha }+\frac N{pk}} \lesssim  \| u (t)\|_{L^p (\omega )} \lesssim  \| u (t)\|_{L^p (\R^N  )}  \lesssim  (- t)^{-\frac 1{\alpha }} .
\end{equation} 
Property~\eqref{eThm1:1} follows by letting $p=2$.
Next, we prove that
\begin{equation} \label{fEstLp:3} 
\lim _{ t\uparrow 0 }  \| \nabla u (t) \| _{ L^2 (\R^N ) } = \infty .
\end{equation} 
If $N\ge 3$, this follows from~\eqref{fEstLp:2} with $p= \frac {2N} {N-2}$ and Sobolev's inequality
\begin{equation*} 
 \| u (t)  \| _{ L^{\frac {2N} {N-2}} (\R^N  ) } \lesssim  \| \nabla u (t) \| _{ L^2 (\R^N ) }.
\end{equation*} 
If $N= 1$, we apply~\eqref{fEstLp:2} with $p=\infty $ and obtain using Gagliardo-Nirenberg's inequality
\begin{equation*} 
(-t)^{- \frac {2} {\alpha }} \lesssim  \| u(t) \| _{ L^\infty (\R) }^2  \lesssim  \| \nabla u (t)\| _{ L^2 (\R) }  \| u(t) \| _{ L^2 (\R)  } 
 \lesssim (-t) ^{-\frac {1} {\alpha }}  \| \nabla u (t)\| _{ L^2 (\R) } ,
\end{equation*} 
so that $  \| \nabla u (t)\| _{ L^2 (\R) } \gtrsim (-t) ^{- \frac {1} {\alpha }} $.
If $N=2$, we apply~\eqref{fEstLp:2} and Gagliardo-Nirenberg to obtain
\begin{equation*} 
(- t)^{-\frac 1{\alpha }+\frac N{pk}}  \lesssim  \| u (t)\|_{L^p (\R^2  )}  \lesssim  \| \nabla u(t)\| _{ L^2 (\R^2) }^{\frac {p-2} {p}}  \|  u(t)\| _{ L^2 (\R^2) }^{\frac { 2} {p}} \lesssim (-t) ^{- \frac {2} {\alpha p}} \| \nabla u(t)\| _{ L^2 (\R^2) }^{\frac {p-2} {p}}.
\end{equation*} 
For $p> 2 + \frac {N\alpha } {2}$, we deduce that $ \| \nabla u(t)\| _{ L^2 (\R^2) } \gtrsim (-t) ^{-\nu}$ with $\nu >0$.
This completes the proof of~\eqref{fEstLp:3}. 
Property~\eqref{eThm1:2} is an immediate consequence of~\eqref{fEstLp:3} and~\eqref{xnotE}.

The proof of Theorem~\ref{eThm1} is now complete. 

\begin{rem} \label{eCRem1} 
As observed at the beginning of Section~\ref{sBUAnsatz}, the construction of the blow-up ansatz does not require any upper bound on the power $\alpha $.
Theorem~\ref{eThm1} is restricted to $H^1$-subcritical powers because the energy estimates of Section~\ref{sApprox} only provide $H^1$ bounds. 
It is not too difficult to see that a similar result holds in the $H^1$-critical case $N\ge 3$ and $\alpha = \frac {4} {N-2}$. 
Indeed, in this case, the blow-up alternative is not that $ \| u(t)\| _{ H^1 }$ blows up, but that certain Strichartz norms blow up, for instance 
$ \| u  \| _{ L^{\frac {2N} {N-2}} _t  L^{\frac {2N^2} {(N-2)^2}} _x  }$. 
Control of this norm is given by estimate~\eqref{fAP5:b2} and the 
inequality
\begin{equation*} 
 \| u \| _{ L^{ \frac { 2 N^2 } { (N-2)^2 } } }^{ \frac {2N} {N-2} } =  \| \,  |u|^{\frac {\alpha +2} {2}} \| _{ L^{\frac {2N} {N-2}} }^2 \lesssim  \| \nabla (  |u|^{\frac {\alpha +2} {2}} ) \| _{ L^2 }^2 \lesssim \int  |u|^\alpha   | \nabla u |^2 . 
\end{equation*} 
For $H^1$ supercritical powers, higher order estimates would be required. 
It is not unlikely that a result similar to Theorem~\ref{eThm1} can be proved in the $H^2$-subcritical case $(N-4) \alpha <4$, by establishing $H^2$ estimates through $L^2$ estimates of $\partial _tu$, in the spirit of~\cite{Kato2}.
\end{rem}

\end{document}